\newtheorem{theorem}{Theorem}[section]
\newtheorem{remark}[theorem]{Remark}
\numberwithin{equation}{section}
\DeclareMathOperator{\ext}{ext\,\!}
\begin{document}
\title[Estimates on the norm of polynomials]{Estimates on the norm of polynomials and applications}
\author[G. Ara\'{u}jo]{Gustavo Ara\'{u}jo\textsuperscript{*}}
\address[G. Ara\'{u}jo]{Departamento de Matem\'{a}tica \\
Universidade Federal da Para\'{\i}ba \\
Jo\~{a}o Pessoa - PB \\
58.051-900. Brazil.}
\email{gdasaraujo@gmail.com}

\author[P. Jim\'{e}nez-Rodr\'{\i}guez]{P. Jim\'{e}nez-Rodr\'{\i}guez}
\address[P. Jim\'{e}nez]{Department of Mathematical Sciences \\
Kent State University \\
Kent, OHIO, 44242. USA.\\
}
\email{pjimene1@kent.edu}
\author[G. A. Mu\~{n}oz]{Gustavo A. Mu\~{n}oz-Fern\'{a}ndez%
\textsuperscript{**}}
\address[G. A. Mu\~{n}oz-Fern\'{a}ndez]{Departamento de An\'{a}lisis Matem%
\'{a}tico \\
Facultad de Ciencias Matem\'{a}ticas \\
Plaza de Ciencias 3 \\
Universidad Complutense de Madrid \\
Madrid, 28040 (Spain).}
\email{gustavo\_fernandez@mat.ucm.es}
\author[J. B. Seoane]{Juan B. Seoane-Sep\'{u}lveda\textsuperscript{**}}
\address[J. B. Seoane-Sep\'{u}lveda]{Departamento de An\'{a}lisis Matem\'{a}%
tico \\
Facultad de Ciencias Matem\'{a}ticas \\
Plaza de Ciencias 3 \\
Universidad Complutense de Madrid \\
Madrid, 28040 (Spain).}
\email{jseoane@mat.ucm.es}
\thanks{\textsuperscript{*}Supported by PDSE/CAPES 8015/14-7.}
\thanks{\textsuperscript{**}Supported by the Spanish Ministry of Science
and Innovation, grant MTM2012-34341.}
\subjclass[2010]{Primary 05C38, 15A15; Secondary 05A15, 15A18}
\keywords{absolutely summing operators, Bohnenblust--Hille inequality}

\begin{abstract}
In this paper, equivalence constants between various polynomial norms are calculated. As an application, we also obtain sharp values of the Hardy--Littlewood constants for $2$-homogeneous polynomials on $\ell_p^2$ spaces, $2<p\leq\infty$ and lower estimates for polynomials of higher degrees. 
\end{abstract}

\maketitle

\section{Introduction}

Let $\alpha = (\alpha_{1}, \ldots , \alpha _{n}) \in (\mathbb{N}%
\cup\{0\})^{n}$, and define $|\alpha| := \alpha_{1} + \cdots + \alpha_{n}$.
Let $\mathcal{P}(^m\mathbb{R}^n)$ be the finite dimension linear space of
all homogeneous polynomials of degree $m$ on $\mathbb{R}^n$. If $\mathbf{%
x}^{\alpha}$ stands for the monomial $x_{1}^{\alpha _{1}} \cdots
x_{n}^{\alpha_{n}}$ for $\mathbf{x}=(x_{1},\ldots ,x_{n})\in \mathbb{\mathbb{%
R}}^{n}$ and $P\in \mathcal{P}(^m\mathbb{R}^n)$, then $P$ can be written as
\begin{equation}  \label{generalpolynomial}
P(\mathbf{x})=\sum_{|\alpha |=m}a_{\alpha }\mathbf{{x}^{\alpha }.}
\end{equation}
If $|\cdot|$ is a norm on $\mathbb{R}^n$, then
\begin{equation*}
\|P\| := \sup_{x\in B_X}|P(x)|,
\end{equation*}
where $B_X$ is the unit ball of the Banach space $X = (\mathbb{R}^n,|\cdot|)$%
, defines a norm in $\mathcal{P}(^m\mathbb{R}^n)$ usually called polynomial
norm. The space $\mathcal{P}(^m\mathbb{R}^n)$ endowed with the polynomial
norm induced by $X$ is denoted by $\mathcal{P}(^m X)$.

Other norms customarily used in $\mathcal{P}(^m\mathbb{R}^n)$ besides the
polynomial norm are the $\ell_q$ norms of the coefficients, i.e., if $P$ is
as in \eqref{generalpolynomial} and $q \geq 1$, then
\begin{equation*}
|P|_q:=\begin{cases}
\left(\sum_{|\alpha|=m}|a_\alpha|^q\right)^{\frac{1}{q}}&\text{if $1\leq q<+\infty$,}\\
\max\{|a_\alpha|:|\alpha|=n\}&\text{if $q=+\infty$,}
\end{cases}
\end{equation*}
defines another norm in $\mathcal{P}(^m\mathbb{R}^n)$.

The polynomial norm $\|P\|$ is most of the times very difficult to compute, whereas the $\ell_q$ norm of the coefficients $|P|_q$ can be obtained straightforwardly. For this reason it would be convenient to have a good estimate of $\|P\|$ in terms of $|P|_p$. If $\|\cdot\|_p$ represents the polynomial norm of ${\mathcal P}(^m\ell_p^n)$, this paper is devoted to obtain sharp estimates on $\|\cdot\|_p$ ($1\leq p\leq +\infty$) by comparison with the norm $|\cdot|_q$ ($1\leq q\leq +\infty$). Actually since all norms in finite dimensional spaces are equivalent, the polynomial
norm $\|\cdot\|_p$ and the $\ell_q$ norm $|\cdot|_q$ of the coefficients are
equivalent in ${\mathcal P}(^m{\mathbb R}^n)$ for all $1\leq p,q\leq +\infty$, and therefore there exists constants $k>0$ and $K>0$ such that
\begin{equation}  \label{generalproblem}
k\|P\|_p\leq |P|_q \leq K\|P\|_p,
\end{equation}
for all $P\in \mathcal{P}(^m\mathbb{R}^n)$. We will denote the optimal constants in the
above inequalities by $k_{m,n,q,p}$ and $K_{m,n,q,p}$, respectively. Observe that $k_{m,n,q,p}$ is the biggest $k$ fitting in the first inequality in \eqref{generalproblem} whereas $K_{m,n,q,p}$ is the smallest possible $K$ in the second inequality in \eqref{generalproblem}. Since we will study mainly polynomials in ${\mathbb R}^2$, i.e., $n=2$, for the sake of simplicity we will use $k_{m,q,p}$ and $K_{m,q,p}$ instead of $k_{m,2,q,p}$ and $K_{m,2,q,p}$ respectively.
In this paper we calculate the exact values of the constants $k_{m,q,p}$ and $K_{m,q,p}$ for several choices of $m$, $q$ and $p$ with $m\in{\mathbb N}$ and $1\leq q,p\leq +\infty$ using the so called Krein-Milman approach. As a consequence of the Krein-Milman Theorem, for every convex function $f:C\rightarrow {\mathbb R}$ that attains its maximum on a convex set $C$ there exists an extreme point $e$ of $C$ such that $f(e)=\max \{f(x):x\in C\}$. The target function to which the Krein-Milman approach will be applied to calculate $K_{m,q,p}$ is
    $$
    B_{\|\cdot\|_p}\ni P\mapsto |P|_q,
    $$
where $B_{\|\cdot\|_p}$ is the (closed) unit ball of the space ${\mathcal P}(^m\ell_p^2)$.
On the other hand, if $k'_{m,q,p}$ is the smallest $k'$ so that
    $$
    \|P\|_p\leq k'|P|_q,
    $$
for all $P\in {\mathcal P}(^m{\mathbb R}^2)$, it is straightforward that $k_{m,q,p}=\frac{1}{k'_{m,q,p}}$. In in order to calculate $k'_{m,q,p}$ we apply the Krein-Milman approach to the function
    $$
    B_{|\cdot|_q}\ni P\mapsto \|P\|_p,
    $$
where $B_{|\cdot|_q}$ is the unit ball of the space ${\mathcal P}(^m{\mathbb R}^2)\simeq {\mathbb R}^{m+1}$ endowed with the $\ell_q$ norm $|\cdot|_q$.

According to the previous comments, it seems essential to have a complete description of the sets of extreme points of $B_{|\cdot|_q}$ and $B_{\|\cdot\|_p}$, denoted from now on as $\ext(B_{|\cdot|_q})$ and $\ext(B_{\|\cdot\|_p})$ respectively. As for $\ext(B_{|\cdot|_q})$ it is well known that
    $$
    \ext(B_{|\cdot|_q})=\begin{cases}
    \{\pm e_k:\ 1\leq k\leq m+1\}&\text{if $q=1$},\\
    \{\sum_{k=1}^{m+1} \epsilon_k e_k:\ \epsilon_k=\pm 1\}&\text{if $q=+\infty$},\\
    S_{|\cdot|_q}&\text{if $1<q<+\infty$},
    \end{cases}
    $$
where $\{e_1,\ldots,e_{m+1}\}$ is the canonical basis of ${\mathbb R}^{m+1}$ and $S_{|\cdot|_q}$ is the unit sphere of $({\mathbb R}^{m+1},|\cdot|_q)$. On the other hand, the set $\ext(B_{\|\cdot\|_p})$ has also been studied by several authors in \cite{choi1,choi2,G,qm,grecu} and will be explicitly stated for the sake of completeness whenever it is used.

The problems we have just stated in the previous paragraphs are closely related to other questions of interest.
For instance, the famous polynomial Bohnenblust-Hille and Hardy-Littlewood constants are defined from the constants $K_{m,n,q,p}$ considered above. The $m$-th polynomial Bohnenblust-Hille constant is nothing but an upper bound on $K_{m,n,\frac{2m}{m+1},\infty}$, for $n\in{\mathbb N}$.
The reason why the specific choice $q=\frac{2m}{m+1}$ and $p=\infty$ is of interest rests on the fact that the set $\{K_{m,n,q,\infty}:\ n\in{\mathbb N}\}$ is bounded if and only if $q\geq \frac{2m}{m+1}$. Hence, if $q\geq \frac{2m}{m+1}$ there exists a constant $C_{m,q}$ depending only on $m$ and $q$ such that
    \begin{equation}\label{equ:BH}
    |P|_\frac{2m}{m+1}\leq C_{m,q}\|P\|_\infty,
    \end{equation}
for all $P\in{\mathcal P}(^m{\mathbb R}^n)$ and every $n\in{\mathbb N}$. This result was proved by Bohnenblust and Hille in 1931 (see \cite{bh}). Observe that a plausible choice for $C_{m,q}$ would be $C_{m,q}=\sup\{K_{m,n,q,\infty}:\ n\in{\mathbb N}\}$. Actually the best (in the sense of smallest) possible choice for $C_{m,q}$ in \eqref{equ:BH} when $q=\frac{2m}{m+1}$ is called the polynomial Bohnenblust-Hille constant. It is interesting to notice that there exists an apparent difference between the polynomial Bohnenblust-Hille constants for real and complex polynomials. For this reason, the polynomial Bohnenblust-Hille constants are usually denoted by $D_{{\mathbb K},m}$, where ${\mathbb K}$ is either the real or complex field.

Also, if we keep $n\in {\mathbb N}$ fixed, the best (smallest) $C_{m,n}$ in
    $$
    |P|_\frac{2m}{m+1}\leq C_{m,n}\|P\|_\infty,
    $$
for all $P\in{\mathcal P}(^m{\mathbb R}^n)$ is denoted by $D_{{\mathbb K},m}(n)$. Observe that $D_{{\mathbb K},m}(n)=K_{m,n,\frac{2m}{m+1},\infty}$ and hence
    $$
    D_{{\mathbb K},m}=\sup\{D_{{\mathbb K},m}(n):\ n\in{\mathbb N}\}=\sup\left\{K_{m,n,\frac{2m}{m+1},\infty}:\ n\in{\mathbb N}\right\}.
    $$
The calculation of the Bohnenblust-Hille constants $D_{{\mathbb K},m}$ and $D_{{\mathbb K},m}(n)$ has motivated a large amount of papers (see, for example, \cite{annals2011}), but their exact values are still unknown except for very restricted choices of $m$'s and $n$'s. The best lower and upper estimates on $D_{{\mathbb K},m}$ and $D_{{\mathbb K},m}(n)$ known nowadays can be found in \cite{baypelseo,campos,jimmunmurseo}.

A similar result to that of Bohnenblust-Hille can be proved for other values of $p$ different from $\infty$. Indeed, there are constants $C_{m,p}$ and $D_{m,p}$ independent from $n$ such that
\begin{align}
|P|_{\frac{p}{p-m}}&\leq C_{m,p}\|P\|_p\quad \text{for }m<p\leq
2m,\label{ali:HL1}\\
|P|_{\frac{2mp}{mp+p-2m}}&\leq D_{m,p}\| P\|_p\quad \text{for }%
2m\leq p \leq \infty,\label{ali:HL2}
\end{align}
for all $P\in {\mathcal P}(^m{\mathbb R}^n)$ and every $n\in{\mathbb N}$. Here we put $\frac{2mp}{mp+p-2m}=\frac{2m}{m+1}$ when $p=\infty$.
Moreover, the exponents $\frac{p}{p-m}$ and $\frac{2mp}{mp+p-2m}$ in
\eqref{ali:HL1} and \eqref{ali:HL2} respectively are optimal in the sense that for $q < \frac{p%
}{p-m}$ or $q<\frac{2mp}{mp+p-2m}$ any constant $H$ fitting in the inequality
\begin{equation*}
|P|_{q}\leq H\| P\|_p
\end{equation*}
for all $P \in \mathcal{P}(^m\mathbb{R}^n)$ depends necessarily on $%
n $. The proof of the previous highly non trivial results can be found in \cite{albuquerque,dimant}. Let us denote by $C_{{\mathbb K},m,p}$ and $D_{{\mathbb K},m,p}$ the best (smallest) possible constants in \eqref{ali:HL1} and \eqref{ali:HL2} respectively, depending on whether we consider real (${\mathbb K}={\mathbb R}$) of complex (${\mathbb K}={\mathbb R}$) polynomials. These constants are called the polynomial Hardy-Littlewood constants. Notice that the polynomial Bohnenblust-Hille constant $D_{{\mathbb K},m}$ coincides with the Hardy-Littlewood constant $D_{{\mathbb K},m,p}$ when $p=\infty$. Similarly as in the Bohnenblust-Hille setting, we define  $C_{{\mathbb K},m,p}(n)$ and $D_{{\mathbb K},m,p}(n)$ as the best (smallest) value of the constants appearing in \eqref{ali:HL1} and \eqref{ali:HL2} respectively, for $n\in{\mathbb N}$ fixed. Observe that $D_{{\mathbb K},m,p}(n)=K_{m,n,\frac{p}{p-m},p}$. Therefore, we have the following equalities for the optimal constants
of the polynomial Hardy--Littlewood constants:
$$
\begin{cases}
C_{{\mathbb K},m,p} = \sup\limits_n K_{m,n,\frac{p}{p-m},p} & \text{ for }m<p\leq 2m, \\
D_{{\mathbb K},m,p} = \sup\limits_n K_{m,n,\frac{mp}{mp+p-2m},p} & \text{ for }2m\leq p
\leq \infty.
\end{cases}
$$
The calculation of the polynomial Hardy-Littlewood constants $C_{{\mathbb K},m,p}$ and $D_{{\mathbb K},m,p}$ has been the objective of steadily increasing number of publications during the last few years. We refer the interested reader to \cite{aps,ajmnpss} and the references therein for a more detailed understanding on this topic.

\section{Equivalence constants $k_{2,q,p}$ and $K_{2,q,p}$ for $q,p\in\{1,\infty\}$}

In order to apply the Krein-Milman approach we need a full description of $\ext(B_{\|\cdot\|_1})$ and $\ext(B_{\|\cdot\|_\infty})$, which is provided in the following results:

\begin{theorem}[Y.S. Choi, S.G. Kim, and H. Ki, \cite{choi2}]
The extreme polynomials of $B_{\|\cdot\|_1}$ are of the form
\begin{itemize}
\item [(a)] $P(x,y)=\pm x^2\pm 2xy\pm y^2$, or
\item [(b)] $P(x,y)=\pm\frac{\sqrt{4|t|-t^2}}{2}(x^2-y^2)+txy$, where $|t|\in(2,4]$.
\end{itemize}
\end{theorem}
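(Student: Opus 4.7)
My plan would be to identify $\mathcal{P}(^2\mathbb{R}^2)$ with $\mathbb{R}^3$ via the coefficient map $P=ax^2+bxy+cy^2\mapsto (a,b,c)$, and to use the standard criterion that a unit-norm polynomial $P$ is extreme in $B_{\|\cdot\|_1}$ if and only if every $Q\in\mathcal{P}(^2\mathbb{R}^2)$ for which $\|P\pm\epsilon Q\|_1\le 1$ holds for some $\epsilon>0$ must vanish. The central object is the norming set
\[
N(P)=\{\,z\in S_{\ell_1^2}:\ |P(z)|=1\,\}
\]
together with its first-order tangential data: at each $z_0\in N(P)$ the perturbation inequality immediately forces $Q(z_0)=0$, and when the restriction of $P$ to the face of $S_{\ell_1^2}$ through $z_0$ has vanishing tangential derivative there (which is automatic at face-interior norming points, and occurs at vertex norming points in degenerate configurations), a second-order Taylor expansion of the face-restriction of $P\pm\epsilon Q$ forces the tangential derivative of $Q$ at $z_0$ to vanish too. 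Each such critical norming point therefore contributes two linear conditions on $(a',b',c')\in\mathbb{R}^3$, ordinary vertex norming points contribute one, and extremality of $P$ amounts to the resulting system having only the trivial solution.

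To carry this out I would restrict $P$ to each of the four segments of $S_{\ell_1^2}$; on the segment $y=1-x$, $x\in[0,1]$, the one-variable quadratic $f_P(x)=(a-b+c)x^2+(b-2c)x+c$ has endpoint values $c$ and $a$ and a unique critical point $x^{\ast}=(2c-b)/(2(a-b+c))$ when $a-b+c\ne 0$. The remaining three segments follow from the symmetries $(x,y)\mapsto(\pm x,\pm y)$, $(x,y)\mapsto(y,x)$ and $P\mapsto -P$, which act on $(a,b,c)$ by coordinate and sign changes and reduce the analysis to one segment up to relabelling. A case split on the shape of $N(P)$ then yields the two families. If $|f_P|\equiv 1$ on an entire segment, matching coefficients forces $a=c=\pm 1$, $b=\pm 2$, so $P=\pm(x\pm y)^2$, a subset of family (a); here $Q$ must vanish on a whole line, hence vanishes as a quadratic form. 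If the norm is attained only at vertices and the face-restrictions have vanishing derivative at those vertices, the relations $b=\pm 2a$ and $b=\mp 2c$ together with $|a|=|c|=1$ produce the remaining mixed-sign polynomials of family (a). Finally, if $N(P)$ contains interior critical points, a dimension count forces at least two (typically one on each of two adjacent segments, with opposite norming values $\pm 1$); the two equations $f_P(x^{\ast})=1$ and $g_P(x^{\ast\ast})=-1$, where $x^{\ast}$, $x^{\ast\ast}$ are the interior critical points of the two face-restrictions, are two equations in the three unknowns $(a,b,c)$ and cut out a one-parameter family, which by direct elimination yields $c=-a$ and the parameterization $a=\pm\tfrac{1}{2}\sqrt{4|t|-t^2}$, $b=t$. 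The range $|t|\in(2,4]$ arises from $4|t|-t^2\ge 0$ together with $x^{\ast}\in(0,1)$; at $|t|=2$ the critical point coalesces with a vertex and the polynomial falls into family (a), which explains the open endpoint.

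For the converse direction, in each case I would verify that the norming data yield a rank-three linear system on $(a',b',c')$: for the perfect-square members of family (a) the line-vanishing is automatic, for the mixed-sign members of (a) the value-plus-derivative constraints at each vertex already give rank three, and for family (b) the four conditions from the two interior critical points are checked (after the substitution $c=-a$) to have rank three via a direct $4\times 3$ computation. The main obstacle in this approach is the bookkeeping in the case split: the norming set can \emph{a priori} take several shapes---interior point, vertex, or a mixture, on one or both segments---and one must use the second-order concavity information at each norming point, combined with the symmetries, to rule out configurations that do not actually produce extreme polynomials. Organizing this case analysis cleanly, rather than letting it explode combinatorially, is the real technical challenge of the argument.
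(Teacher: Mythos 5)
This statement is imported verbatim from Choi, Kim and Ki \cite{choi2}; the paper you are comparing against contains no proof of it, so there is no in-paper argument to match your proposal to. Judged on its own terms, your strategy --- the perturbation characterisation of extreme points applied face by face to the four segments of $S_{\ell_1^2}$ --- is a legitimate route, and the computations you sketch are consistent with the stated result. On the face $y=1-x$ the critical value of $f_P$ is $\frac{4ac-b^2}{4(a-b+c)}$ and on the adjacent face $y=x-1$ it is $\frac{4ac-b^2}{4(a+b+c)}$; demanding interior norming points of modulus one on both faces forces $b(a+c)=0$, the case $b=0$ is impossible for a unit-norm $P$, and $c=-a$ then gives critical values $\pm\frac{4a^2+b^2}{4|b|}$, whence $a=\pm\frac{1}{2}\sqrt{4|t|-t^2}$ with $t=b$ and the window $|t|\in(2,4]$ exactly as you say. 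The constant-on-a-face and vertex cases likewise reproduce family (a), and your second-order argument at $(1,0)$ for $x^2+2xy-y^2$ is the right mechanism.

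The genuine gap is that the ``only if'' direction is asserted rather than proved, and it does not follow from what you have written. Non-extremality of a configuration whose norming conditions have rank $<3$ is not automatic: a nonzero $Q$ in the kernel of the linear system must actually satisfy $\|P\pm\epsilon Q\|_1\le1$ for some $\epsilon>0$, and for that you need a uniform bound $|P|\le 1-\delta$ off a neighbourhood of the norming set (compactness) together with second-order control of $P\pm\epsilon Q$ at each norming point; for example $P=x^2$ has a one-point norming set (up to sign) and one must exhibit the perturbation $\pm\epsilon xy$ explicitly and check the resulting quadratics on all four faces. Your ``dimension count'' for why an interior norming point on one face forces a second one on an adjacent face with the opposite value is likewise not an argument until this is done. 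Two smaller items also need care: at a vertex norming point the two adjacent face-restrictions need not both have vanishing derivative (in the example above one derivative is $-b'$-type and the other is not), so the second-order condition is one-sided and must be justified as such; and the rank-three verification for family (b), which is where the conclusion that these polynomials really are extreme lives, is deferred to ``a direct $4\times3$ computation'' that should be exhibited, including the degenerate endpoint $|t|=4$ where the polynomial collapses to $\pm4xy$ and the norming set changes character.
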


\begin{theorem}[Y.S. Choi, S.G. Kim, \cite{choi1}]
The extreme polynomials of $B_{\|\cdot\|_\infty}$ are of the form
\begin{itemize}
\item [(a)] $P(x,y)=\pm x^2$, or
\item [(b)] $P(x,y)=\pm y^2$, or
\item [(c)] $P(x,y)=\pm \left(tx^2-ty^2\pm 2\sqrt{t(1-t)}xy\right)$, where $t\in\left[\frac{1}{2},1\right]$.
\end{itemize}
\end{theorem}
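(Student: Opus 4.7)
The plan is to use the standard characterization of extreme points in a finite-dimensional normed space,
\begin{equation*}
P\in\ext(B_E)\iff\bigl[\|P+Q\|_\infty\leq 1\text{ and }\|P-Q\|_\infty\leq 1\Longrightarrow Q=0\bigr],
\end{equation*}
for $E=\mathcal{P}(^2\ell_\infty^2)\simeq\mathbb{R}^3$ via $(a,b,c)\mapsto ax^2+bxy+cy^2$. The key step is that this double inequality translates into linear conditions on a candidate perturbation $Q=\alpha x^2+\beta xy+\gamma y^2$ coming from the norming set $N_P:=\{(x_0,y_0)\in[-1,1]^2:|P(x_0,y_0)|=1\}$: at each $(x_0,y_0)\in N_P$ one must have $Q(x_0,y_0)=0$, and whenever $(x_0,y_0)$ lies in the relative interior of an edge of the square, the tangential partial derivative of $Q$ at $(x_0,y_0)$ must also vanish (because $|P|$ has an interior maximum there along that edge). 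Since $|P|<1$ strictly off the compact set $N_P$, any $Q$ satisfying these linear conditions automatically satisfies $\|P\pm\epsilon Q\|_\infty\leq 1$ for all small $\epsilon>0$; hence $P$ is extreme if and only if the linear system admits only $Q=0$.

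For the direct direction, case (a) $P=\pm x^2$ has $|P|\equiv 1$ on the edges $x=\pm 1$, so $\alpha+\beta y+\gamma y^2\equiv 0$ for all $y\in[-1,1]$, forcing $\alpha=\beta=\gamma=0$; case (b) is symmetric under $(x,y)\mapsto(y,x)$. For case (c) with $t\in(1/2,1)$, one first checks $\|P\|_\infty=1$ by setting $y^\ast=\sqrt{(1-t)/t}\in(0,1)$ and verifying that $y\mapsto P(1,y)$ attains its interior maximum $1$ at $y^\ast$ while $x\mapsto P(x,1)$ attains its minimum $-1$ at $x=-y^\ast$. Both $(1,y^\ast)$ and $(-y^\ast,1)$ are interior-edge norming points, yielding four linear equations in $(\alpha,\beta,\gamma)$; an elementary elimination (using $\beta=-2\gamma y^\ast=2\alpha y^\ast$) gives $\gamma=-\alpha$ and $\alpha(1+(y^\ast)^2)=0$, so $Q=0$. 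The endpoint $t=1/2$ (where $y^\ast=1$ and the norming points migrate to the vertices $(\pm 1,\pm 1)$, enlarging $N_P$) and $t=1$ (where $P=\pm(x^2-y^2)$ and the norming points are $(\pm 1,0)$, $(0,\pm 1)$) are handled by direct inspection with the enlarged norming sets.

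For the converse, let $P$ be extreme with $\|P\|_\infty=1$. Using the dihedral symmetries $P(x,y)\mapsto\pm P(\pm x,\pm y)$ and $P(x,y)\mapsto P(y,x)$, one reduces to a fundamental domain in coefficient space. The shape of $N_P$ is determined by the extremal behaviour of the four quadratics obtained by restricting $P$ to the edges of $[-1,1]^2$, and the analysis splits into cases. If $P$ is constantly $\pm 1$ on an entire edge, then $P=\pm x^2$ or $\pm y^2$. If $N_P$ consists only of vertices, at most two independent linear conditions on $Q$ arise (e.g.\ when $N_P=\{\pm(1,1),\pm(1,-1)\}$ one gets $\alpha+\gamma=\beta=0$, so $Q=x^2-y^2$ is admissible); thus $P$ lies on a flat face $\lambda x^2+(1-\lambda)y^2$ of the unit ball and is extreme only at the endpoints $\pm x^2,\pm y^2$. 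If $N_P$ contains an interior-edge point, a direct computation rules out all mixed configurations (interior-edge plus vertex on either the same or an adjacent edge) by showing they force $\|P\|_\infty>1$ on some other edge, except in a degenerate limit already corresponding to $t=1/2$ of family (c); the only surviving configuration has interior-edge norming points on two adjacent edges with opposite signs. Imposing the value and derivative relations $a-b^2/(4c)=1$ and $c-b^2/(4a)=-1$ then forces $a=-c$ and $b^2=4a(1-a)$, which is precisely form (c) with $t=a$.

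The main obstacle is the thoroughness of the case analysis in the converse, especially showing that no ``mixed'' configuration of norming points produces a new extreme polynomial outside the listed families. One has to compute the restrictions of $P$ to every edge explicitly, verify that the norm constraint $\|P\|_\infty=1$ is violated in each subcase, and finally reconcile the boundary parameters $t=1/2$ and $t=1$ of family (c) with the isolated types (a) and (b) so that no extreme point is missed or double-counted.
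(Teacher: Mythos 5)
The paper offers no proof of this statement: it is imported verbatim from Choi and Kim \cite{choi1} and used as a black box, so your argument has to be judged on its own rather than against an in-paper proof.

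Judged that way, it has a genuine gap: the linearized criterion you build everything on is false at vertex norming points, and this breaks precisely the case $t=\tfrac12$ and the step of your converse that handles polynomials norming only at vertices. Take $P(x,y)=\tfrac12 x^2-\tfrac12 y^2+xy$ (i.e.\ $t=\tfrac12$). Its norming set is the four vertices $(\pm1,\pm1)$, so your rules impose only $Q(\pm1,\pm1)=0$, i.e.\ $\alpha+\gamma=\beta=0$, and $Q=x^2-y^2$ survives; your criterion then declares $P$ non-extreme, contradicting the very statement being proved. In reality $\|P+\epsilon(x^2-y^2)\|_\infty>1$ for every $\epsilon>0$: on the edge $x=1$ the restriction $y\mapsto(\tfrac12+\epsilon)-(\tfrac12+\epsilon)y^2+y$ equals $1$ at $y=1$ but has derivative $-2\epsilon<0$ there, so it exceeds $1$ just inside the edge. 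What is missing is that at a vertex norming point the \emph{one-sided} tangential derivative of $P\pm Q$ along each adjacent edge is constrained; when the tangential derivative of $P$ itself vanishes at that vertex (as happens for $t=\tfrac12$, where the critical point $y^\ast=\sqrt{(1-t)/t}$ has migrated to the corner), this forces the tangential derivative of $Q$ to vanish too, and only with these extra equations does the system collapse to $Q=0$. The same omission invalidates your converse step ``$N_P$ consists only of vertices, hence $Q=x^2-y^2$ is admissible and $P$ lies on the flat face $\lambda x^2+(1-\lambda)y^2$'': the polynomial $P_{1/2}$ above has exactly that norming set and is not on that face. Your computations for $t\in(\tfrac12,1)$ and $t=1$, where all norming points are interior to edges, are correct, and the overall strategy (extreme if and only if the only admissible perturbation is $0$, read off from the norming set) is sound once the vertex conditions are stated properly; but as written the criterion misclassifies $t=\tfrac12$, and the converse---which you yourself flag as the hard part---is only sketched and leans on the flawed version.
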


\begin{theorem}
    $$
    k_{2,q,p}=\begin{cases}
    1&\text{if $q=p=1$,}\\
    1&\text{if $q=1$ and $p=\infty$,}\\
    1&\text{if $q=\infty$ and $p=1$,}\\
    \frac{1}{3}&\text{if $q=p=\infty$}.
    \end{cases}
    $$
Extremal polynomials are given in the following list:
    \begin{align*}
        p_{1,1}(x,y)&=\pm x^2,\ \pm y^2,\\
        p_{1,\infty}(x,y)&=\pm x^2,\ \pm y^2,\\
        p_{\infty,1}(x,y)&=\pm x^2\pm y^2\pm xy,\\
        p_{\infty,\infty}(x,y)&=\pm\left(x^2+y^2\pm xy\right).
    \end{align*}
\end{theorem}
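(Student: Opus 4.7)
The plan is to exploit the Krein--Milman reformulation introduced in the introduction. Writing $k_{2,q,p} = 1/k'_{2,q,p}$ with $k'_{2,q,p} = \sup\{\|P\|_p : |P|_q \le 1\}$, and observing that $P \mapsto \|P\|_p$ is convex and continuous on the compact convex set $B_{|\cdot|_q} \subset \mathcal{P}(^2\mathbb{R}^2) \simeq \mathbb{R}^3$, the supremum is attained at some extreme point of $B_{|\cdot|_q}$. For $q = 1$ the extreme points are the six signed monomials $\pm x^2, \pm xy, \pm y^2$, while for $q = \infty$ they are the eight polynomials $\epsilon_1 x^2 + \epsilon_2 xy + \epsilon_3 y^2$ with $\epsilon_i \in \{\pm 1\}$. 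In each of the four cases the problem therefore collapses to computing $\|P\|_p$ on a small explicit finite set.

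For the two cases with $q = 1$, I only need $\|x^2\|_p$, $\|y^2\|_p$, $\|xy\|_p$ for $p \in \{1, \infty\}$. A direct computation gives $\|x^2\|_1 = \|y^2\|_1 = 1$ (attained at $(1,0)$ and $(0,1)$), $\|xy\|_1 = 1/4$ (attained at $x=y=1/2$), and $\|x^2\|_\infty = \|y^2\|_\infty = \|xy\|_\infty = 1$. Hence $k'_{2,1,1} = k'_{2,1,\infty} = 1$, so $k_{2,1,1} = k_{2,1,\infty} = 1$, with the polynomials $\pm x^2, \pm y^2$ achieving the maximum in both cases.

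For the two cases with $q = \infty$, I would group the eight sign patterns using the symmetries $P \mapsto -P$, $(x,y) \mapsto (y,x)$, and $y \mapsto -y$, reducing to two essentially different polynomials $P_+ = x^2 + xy + y^2$ (the case $\epsilon_1 = \epsilon_3$) and $P_- = x^2 + xy - y^2$ (the case $\epsilon_1 = -\epsilon_3$). For $p = 1$ I would maximize $|P_\pm|$ on the simplex $|x| + |y| \le 1$ by single-variable calculus on each of the four edges; both yield $\|P_\pm\|_1 = 1$, so $k'_{2,\infty,1} = 1$ and $k_{2,\infty,1} = 1$, with extremal polynomials $\pm x^2 \pm y^2 \pm xy$. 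For $p = \infty$ one uses the trivial bound $|P_+(x,y)| \le x^2 + |xy| + y^2 \le 3$ on $[-1,1]^2$, with equality at $(1,1)$; while for $P_-$ a straightforward boundary analysis of $[-1,1]^2$ gives $\|P_-\|_\infty = 5/4 < 3$. Therefore $k'_{2,\infty,\infty} = 3$ and $k_{2,\infty,\infty} = 1/3$, with extremal polynomials $\pm(x^2 + y^2 \pm xy)$.

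The only step demanding attention is the last one: verifying that the sign patterns with $\epsilon_1 = -\epsilon_3$ give strictly smaller $\|\cdot\|_\infty$ so that the extremal polynomials are exactly as claimed. Everything else reduces either to evaluation at the vertices of a square or a simplex or to elementary single-variable optimization along their edges, so no serious obstacle is expected.
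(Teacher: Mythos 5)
Your proposal is correct and is exactly the argument the paper intends: the paper states this theorem without an explicit proof, but the Krein--Milman reduction to the extreme points of $B_{|\cdot|_q}$ described in the introduction, followed by the finite case check you carry out, is the intended route, and your computations ($\|xy\|_1=\tfrac14$, $\|x^2+xy+y^2\|_\infty=3$, $\|x^2+xy-y^2\|_\infty=\tfrac54$, etc.) all check out. The only cosmetic remark is that for $q=1$, $p=\infty$ the polynomials $\pm xy$ are also extremal (since $\|xy\|_\infty=1$), though the paper's list omits them as well.
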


\begin{theorem}
    $$
    K_{2,q,p}=\begin{cases}
    2+2\sqrt{2}&\text{if $q=p=1$,}\\
    1+\sqrt{2}&\text{if $q=1$ and $p=\infty$,}\\
    4&\text{if $q=\infty$ and $p=1$,}\\
    1&\text{if $q=p=\infty$}.
    \end{cases}
    $$
Extremal polynomials are given in the following list:
    \begin{align*}
        P_{1,1}(x,y)&=\pm\frac{\sqrt{2}}{2}(x^2-y^2)+(2+\sqrt{2})xy,\\
        P_{1,\infty}(x,y)&=\pm \left(\frac{2+\sqrt{2}}{4}x^2-\frac{2+\sqrt{2}}{4}y^2\pm \frac{\sqrt{2}}{2}xy\right),\\
        P_{\infty,1}(x,y)&=\pm 4xy,\\
        P_{\infty,\infty}(x,y)&=\pm x^2,\ \pm y^2,\ \pm\left(\frac{1}{2}x^2-\frac{1}{2}y^2\pm xy\right).
    \end{align*}
\end{theorem}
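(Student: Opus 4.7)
The plan is to apply the Krein–Milman approach described in the introduction. Since the target function $P\mapsto |P|_q$ is a convex function on the finite-dimensional unit ball $B_{\|\cdot\|_p}$, its maximum is attained at an extreme point, and the previous two theorems give a complete parametrized list of $\operatorname{ext}(B_{\|\cdot\|_1})$ and $\operatorname{ext}(B_{\|\cdot\|_\infty})$. Hence in each of the four cases it suffices to evaluate $|P|_q$ on every family of extreme points and maximize over the one-parameter family appearing in part (b) (respectively (c)).

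For the two cases with $q=\infty$, the computations are essentially algebraic: $|P|_\infty$ equals the largest absolute value among the coefficients of $P$.
\emph{Case $q=\infty$, $p=1$:} on the discrete family (a) of Theorem~2.1 one gets $|P|_\infty=2$, while on family (b) one has $|P|_\infty=\max\{|t|,\frac{\sqrt{4|t|-t^2}}{2}\}$. A quick comparison shows $|t|\geq \frac{\sqrt{4|t|-t^2}}{2}$ on $(2,4]$, so the max is $|t|$, optimized at $|t|=4$ giving $K_{2,\infty,1}=4$ and extremal $\pm 4xy$.
\emph{Case $q=p=\infty$:} both constants $t$ and $2\sqrt{t(1-t)}$ appearing as coefficients in family (c) of Theorem~2.2 are bounded by $1$ on $[\tfrac12,1]$, and $\pm x^2,\pm y^2$ also have $|P|_\infty=1$; so $K_{2,\infty,\infty}=1$ with all the claimed extremizers.

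The two $q=1$ cases require a single-variable optimization, and this is the main (though still mild) technical step.
\emph{Case $q=p=1$:} on family (a) of Theorem~2.1 one obtains $|P|_1=4$, while on family (b) one gets
\[
|P|_1 = \sqrt{4|t|-t^2}+|t|, \qquad |t|\in(2,4].
\]
Setting the derivative to zero gives $|t|^2-4|t|+2=0$, hence the admissible critical point is $|t|=2+\sqrt{2}$, at which $|P|_1=2+2\sqrt{2}$; this is larger than $4$, so $K_{2,1,1}=2+2\sqrt{2}$ and the extremal polynomial is $P_{1,1}$ as stated.
\emph{Case $q=1$, $p=\infty$:} on families (a), (b) of Theorem~2.2 one has $|P|_1=1$, and on family (c)
\[
|P|_1 = 2t + 2\sqrt{t(1-t)}, \qquad t\in[\tfrac12,1].
\]
Differentiating and solving gives $8t^2-8t+1=0$, so $t=\frac{2+\sqrt{2}}{4}$, which yields $|P|_1=1+\sqrt{2}$ and hence $K_{2,1,\infty}=1+\sqrt{2}$, with extremizer $P_{1,\infty}$.

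The only non-routine element is recognizing which of the two competing candidates (the discrete extreme points versus the one-parameter family) gives the supremum in each case, and then handling the one-parameter maximization carefully; once the critical points are in hand, the value of the constant and the identification of the extremal polynomial are immediate. No case requires more than calculus of a single real variable.
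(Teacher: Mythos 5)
Your proposal is correct and follows exactly the (implicit) argument of the paper: apply the Krein--Milman approach to the convex function $P\mapsto|P|_q$ using the Choi--Kim(--Ki) descriptions of $\ext(B_{\|\cdot\|_1})$ and $\ext(B_{\|\cdot\|_\infty})$, then optimize the resulting one-parameter expressions, which are precisely the functions $f_{1,1}$ and $f_{1,\infty}$ of the paper's subsequent theorem evaluated at $q=1$. All four computations and both critical points ($|t|=2+\sqrt2$ and $t=\frac{2+\sqrt2}{4}$) check out.
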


\begin{theorem}
For every $q\in [1,\infty)$, let $f_{q,1}:\left[2,4\right]\rightarrow {\mathbb R}$ and  $f_{q,\infty}:\left[\frac{1}{2},1\right]\rightarrow {\mathbb R}$ be given by
    \begin{align*}
        f_{q,1}(t)&= \left(2^{1-q}(4t-t^2)^\frac{q}{2}+t^q\right)^\frac{1}{q},\\
        f_{q,\infty}(t)&=\left(2t^q+2^qt^\frac{q}{2}(1-t)^\frac{q}{2}\right)^\frac{1}{q}.
    \end{align*}
Then
    \begin{align*}
        K_{2,q,1}&=\max\{f_{q,1}(t):\ t\in[2,4]\},\\
        K_{2,q,\infty}&=\max\left\{f_{q,\infty}(t):\ t\in\left[\frac{1}{2},1\right]\right\}.\\
    \end{align*}
Actually $K_{2,q,1}=4$ and $K_{2,q,\infty}=2^\frac{1}{q}$ for every $q\geq 2$.
\end{theorem}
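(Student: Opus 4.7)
The plan is to apply the Krein-Milman approach to the convex target function $P\mapsto|P|_q$ on the unit balls $B_{\|\cdot\|_1}$ and $B_{\|\cdot\|_\infty}$, using the explicit descriptions of $\ext(B_{\|\cdot\|_1})$ and $\ext(B_{\|\cdot\|_\infty})$ furnished by Theorems~2.1 and~2.2. Since $|\cdot|_q$ is a norm, hence convex, and the two unit balls are compact and convex, the maximum of $P\mapsto|P|_q$ is attained at an extreme point; equivalently $K_{2,q,p}=\max\{|P|_q:P\in\ext(B_{\|\cdot\|_p})\}$.

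For $p=1$, a direct evaluation shows that the polynomials of Theorem~2.1(a) satisfy $|P|_q=(2+2^q)^{1/q}$, while those of Theorem~2.1(b) with parameter $|t|\in(2,4]$ satisfy $|P|_q=f_{q,1}(|t|)$. Since $f_{q,1}(2)=(2+2^q)^{1/q}$ recovers the family (a) value, both families are absorbed into the formula and
\begin{equation*}
K_{2,q,1}=\max\{f_{q,1}(t):t\in[2,4]\},
\end{equation*}
with the endpoint $f_{q,1}(4)=4$ coming from $\pm 4xy$. Analogously, the extremes $\pm x^2,\pm y^2$ of Theorem~2.2 yield $|P|_q=1$, while the family in Theorem~2.2(c) with $t\in[\tfrac12,1]$ yields $|P|_q=f_{q,\infty}(t)$. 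Since $f_{q,\infty}(1)=2^{1/q}\ge 1$, the first contribution is dominated and
\begin{equation*}
K_{2,q,\infty}=\max\bigl\{f_{q,\infty}(t):t\in[\tfrac12,1]\bigr\}.
\end{equation*}

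The sharpening for $q\ge 2$ is where the actual work lies. For $f_{q,\infty}$ the key point is that $2\sqrt{t(1-t)}\le 1$ on $[0,1]$, so $(2\sqrt{t(1-t)})^q\le(2\sqrt{t(1-t)})^2=4t(1-t)$, and likewise $t^q\le t^2$ on $[0,1]$ whenever $q\ge 2$. Summing these two inequalities gives
\begin{equation*}
f_{q,\infty}(t)^q\le 2t^2+4t(1-t)=-2t^2+4t,
\end{equation*}
and the right hand side is increasing on $[\tfrac12,1]$ with maximum $2$ at $t=1$, so $K_{2,q,\infty}=2^{1/q}$. For $f_{q,1}$ the same observation $\tfrac{\sqrt{4t-t^2}}{2}\le 1$ on $[0,4]$ produces $2^{1-q}(4t-t^2)^{q/2}\le (4t-t^2)/2$ for $q\ge 2$, hence
\begin{equation*}
f_{q,1}(t)^q\le\tfrac{t(4-t)}{2}+t^q.
\end{equation*}
Invoking the tangent-line bound $4^q-t^q\ge q t^{q-1}(4-t)$, valid by convexity of $x\mapsto x^q$ for $q\ge 1$, the target inequality $f_{q,1}(t)^q\le 4^q$ reduces to $1\le 2q t^{q-2}$, and on $[2,4]$ one has $2qt^{q-2}\ge q\cdot 2^{q-1}\ge 4$ for $q\ge 2$; consequently $K_{2,q,1}=4$. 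I expect these two elementary one-variable inequalities to be the only delicate steps; the Krein-Milman reduction and the identification of the target functions $f_{q,1}$ and $f_{q,\infty}$ on the extreme-point families are essentially bookkeeping once Theorems~2.1 and~2.2 are in hand.
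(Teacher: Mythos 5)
Your proof is correct and follows the same route the paper prescribes: the Krein--Milman reduction of $K_{2,q,p}$ to a maximum of $P\mapsto|P|_q$ over $\ext(B_{\|\cdot\|_p})$, followed by evaluation of $|P|_q$ on the Choi--Kim(--Ki) families of extreme polynomials, which is exactly how $f_{q,1}$ and $f_{q,\infty}$ arise (your observation that the family $\pm x^2\pm 2xy\pm y^2$ gives precisely the endpoint value $f_{q,1}(2)$, and that $\pm x^2,\pm y^2$ are dominated by $f_{q,\infty}(1)=2^{1/q}$, is the needed bookkeeping). The one genuine difference is that the paper gives no written proof of this theorem at all, and in particular supports the closed-form values $K_{2,q,1}=4$ and $K_{2,q,\infty}=2^{1/q}$ for $q\ge 2$ only by the numerical table in the subsequent remark; your elementary estimates --- $x^q\le x^2$ for $x\in[0,1]$ and $q\ge 2$, applied to $2\sqrt{t(1-t)}$ and to $\tfrac{1}{2}\sqrt{4t-t^2}$, together with the tangent-line bound $4^q-t^q\ge qt^{q-1}(4-t)$ and the reduction to $1\le 2qt^{q-2}$ on $[2,4]$ --- are valid at every step (I checked the equality cases at $t=4$ and $t=1$, which show the bounds are attained), so your write-up actually supplies an analytic argument the paper omits.
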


\begin{remark}
The functions $f_{q,1}$ and $f_{q,\infty}$ are possible to optimize numerically. We present below some values obtained by computer. However, for some specific choices of $q$, we are able to give explicitly the point of attainment of the maximum.
\begin{center}
	\begin{tabular}{r|l|l}
		$q \in [1,2]$ & Maximum of $f_{q,1}(t)$ & Maximum of $f_{q,\infty}(t)$ \\
		\hline
		1.00 & $2 (1+\sqrt{2})$ & $1+\sqrt{2}$ \\
		4/3 & $4.11346$ & $1.83737$ \\
		3/2 & $4.02012$ & $1.67869$ \\
		1.75 & $4.00003$ & $1.51651$\\
		2.00 & $4$ & $\sqrt{2}$\\
	\end{tabular}
\end{center}

The above values, as one might imagine, are very hard (when possible!) to obtain. For instance, for $q=4/3$ the maximum of $f_{q,1}(t)$ is attained at, precisely, $$t = \frac{1}{9} \left(2 \sqrt[3]{181+9 \sqrt{273}}+\sqrt[3]{1448-72 \sqrt{273}}+14\right) \approx 3.79842.$$

Also, for $q=3/2$ the maximum is attained at $t \approx 3.941955$, whose exact value is given by
{ \scriptsize$$t =
\frac{1}{15} \left(\sqrt{6 \left(-10\ 3^{2/3} \sqrt[3]{\frac{2}{9+\sqrt{93}}}+5\ 2^{2/3} \sqrt[3]{3 \left(9+\sqrt{93}\right)}+24\right)}+ \right.$$ $$\left. + \sqrt{6 \left(10\ 3^{2/3} \sqrt[3]{\frac{2}{9+\sqrt{93}}}-5\ 2^{2/3} \sqrt[3]{3 \left(9+\sqrt{93}\right)}+204 \sqrt{\frac{6}{-10\ 3^{2/3} \sqrt[3]{\frac{2}{9+\sqrt{93}}}+5\ 2^{2/3} \sqrt[3]{3 \left(9+\sqrt{93}\right)}+24}}+48\right)}+18\right).$$}

For instance, for $q=4/3$ the maximum of $f_{q,\infty}(t)$ is attained at, precisely, $$t = \frac{1}{36} \left(2 \sqrt[3]{107+9 \sqrt{129}}+\sqrt[3]{856-72 \sqrt{129}}+16\right) \approx 0.86783.$$

Also, for $q=3/2$ the maximum is attained at

\scriptsize{$$t = \frac{1}{20} \sqrt{\frac{10 \sqrt[3]{9+\sqrt{273}}}{3^{2/3}}-\frac{40}{\sqrt[3]{3 \left(9+\sqrt{273}\right)}}+1} \quad + $$
	$$+ \frac{1}{2} \sqrt{-\frac{\sqrt[3]{9+\sqrt{273}}}{10\ 3^{2/3}}+\frac{1}{50}+\frac{2}{5 \sqrt[3]{3 \left(9+\sqrt{273}\right)}}+\frac{49}{50 \sqrt{\frac{10 \sqrt[3]{9+\sqrt{273}}}{3^{2/3}}-\frac{40}{\sqrt[3]{3 \left(9+\sqrt{273}\right)}}+1}}}+\frac{9}{20} \approx 0.878721.$$}

\end{remark}

\section{Equivalence constants $k_{2,q,p}$ and $K_{2,q,p}$ for $p\in(1,\infty)$}

\begin{theorem}[B. Grecu \cite{grecu}]
Let $1<p<2$. A $2$-homogeneous polynomial of unit norm $P$ is a extreme point
of the unit ball of $\mathcal{P}(^{2}\ell _{p}^{2})$ if and only if

\begin{itemize}
\item[(i)] $P\left( x,y\right) =ax^{2}+cy^{2}$ where $ac\geq 0$ and $%
\left\vert a\right\vert ^{\frac{p}{p-2}}+\left\vert c\right\vert ^{\frac{p}{%
p-2}}=1$, or
\item[(ii)] $P\left( x,y\right) =\pm \left( \frac{\alpha ^{p}-\beta ^{p}}{%
\alpha ^{2}+\beta ^{2}}\left( x^{2}-y^{2}\right) +2\alpha \beta \frac{\alpha
^{p-2}+\beta ^{p-2}}{\alpha ^{2}+\beta ^{2}}xy\right) $, with $\alpha ,\beta
\geq 0$ and $\alpha ^{p}+\beta ^{p}=1$.
\item[(iii)] $P\left( x,y\right) =\pm \left( \frac{\alpha ^{p}-\beta ^{p}}{%
\alpha ^{2}-\beta ^{2}}\left( x^{2}-y^{2}\right) +2\alpha \beta \frac{\alpha
^{p-2}+\beta ^{p-2}}{\alpha ^{2}-\beta ^{2}}xy\right) $, with $\alpha ,\beta
\geq 0$ and $\alpha ^{p}+\beta ^{p}=1$.
\end{itemize}
\end{theorem}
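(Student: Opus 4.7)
The plan is to characterize extreme points via the \emph{norming set} of $P$. Recall the classical criterion that $P$ of unit norm is extreme in $B_{\mathcal{P}(^2\ell_p^2)}$ iff no nonzero $Q\in\mathcal{P}(^2\mathbb{R}^2)$ admits $\|P\pm tQ\|_p\le 1$ for all $t$ in a neighbourhood of $0$. At any norming point $(x_0,y_0)\in S_{\ell_p^2}$ (where $|P(x_0,y_0)|=1$) the bound forces $Q(x_0,y_0)=0$, a linear constraint on the three coefficients of $Q$. Since a nonzero $2$-homogeneous polynomial on $\mathbb{R}^2$ vanishes on at most two lines through the origin, $P$ is automatically extreme as soon as its norming set contains three distinct lines through the origin; when fewer norming lines are available, the first-order vanishing must be supplemented by a second-order expansion on the sphere.

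The norming points of $P(x,y)=ax^2+bxy+cy^2$ are located via Lagrange multipliers on the $C^1$-smooth sphere $S_{\ell_p^2}$:
$$
2ax+by=\mu p\,\mathrm{sgn}(x)|x|^{p-1},\qquad bx+2cy=\mu p\,\mathrm{sgn}(y)|y|^{p-1},
$$
subject to $|x|^p+|y|^p=1$. The isometry group of $\ell_p^2$ (the four sign flips $(x,y)\mapsto(\pm x,\pm y)$ together with the diagonal swap $(x,y)\leftrightarrow(y,x)$) acts on $\mathcal{P}(^2\mathbb{R}^2)$ and allows normalisation of critical-point configurations. A bordered-Hessian computation sorts each critical point into a maximum, minimum, or saddle of $|P|$. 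One then shows that a unit-norm polynomial with two norming lines falls into one of three orbit-types under the symmetry group: type (a), the two norming lines are the coordinate axes (giving the diagonal family (i)); type (b), the two interior norming lines are related by the swap $(x,y)\leftrightarrow(y,x)$ (family (ii)); type (c), they are related by the rotation $(x,y)\mapsto(-y,x)$ (family (iii)). Parametrising each orbit by $\alpha\in[0,1]$ with $\beta=(1-\alpha^p)^{1/p}$ and imposing $P(\alpha,\beta)=\pm 1$ together with the two Lagrange equations at that point yields a linear system in $(a,b,c)$ whose unique solution is exactly the closed form displayed in the corresponding item of the statement; the key algebraic simplification uses $\alpha^p+\beta^p=1$ to cancel mixed terms, as one sees on verifying $P(\alpha,\beta)=1$ directly in family (ii).

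It remains to confirm that every listed polynomial is extreme and that no other unit-norm polynomial is. Since each configuration contributes only two linear constraints on $Q$, the admissible perturbation space is a one-parameter line $\{tQ_0\}$; one expands $P+tQ_0$ tangentially along $S_{\ell_p^2}$ near a norming point and checks that the leading term in $t$ is strictly positive in both directions, ruling out any nontrivial admissible perturbation. The explicit coefficients in (i)--(iii) are precisely those for which this leading positivity holds. Conversely, any other unit-norm polynomial either has a norming set whose first-order kernel is of dimension at least two, or admits a $Q_0$ for which the second-order term vanishes or changes sign; in either case a nontrivial segment through $P$ lies in the unit ball and $P$ fails to be extreme.

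The principal obstacle is the final second-order analysis. For interior norming configurations it reduces to a finite Taylor expansion with smooth bordered Hessian, and positivity follows by substituting the explicit $a,b,c$ together with $\alpha^p+\beta^p=1$. However, when a norming line coincides with a coordinate axis the Hessian of the constraint $|x|^p+|y|^p$ is singular transverse to the axis, so the tangential expansion of $\|P+tQ_0\|_p$ becomes of fractional order $t^{p/(p-1)}$ rather than $t^2$; controlling these fractional asymptotics uniformly in the parameter, and thereby matching the explicit sign conditions in family (i), is the technical heart of the proof.
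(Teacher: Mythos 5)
The paper does not prove this statement at all: it is quoted, with attribution, from Grecu's paper \cite{grecu}, so there is no in-paper argument to compare against. Judged on its own merits, your sketch picks a sensible and standard strategy (the perturbation criterion $\|P\pm tQ\|_p\le 1$, first-order vanishing of $Q$ on the norming set, Lagrange multipliers on $S_{\ell_p^2}$, and a second-order analysis when only two norming lines are present), and you correctly identify that for $1<p<2$ the expansion at an axis norming point is of fractional order because $|x|^p+|y|^p$ is only $C^1$ there. But the proposal has two genuine gaps that leave it an outline rather than a proof.

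First, the classificatory step --- ``a unit-norm polynomial with two norming lines falls into one of three orbit-types under the isometry group of $\ell_p^2$'' --- is asserted, not proved, and it is exactly where the content of the theorem lives. Nothing in the perturbation criterion forces the two norming lines of an extreme polynomial to be exchanged by a signed permutation of the coordinates; that happens for family (ii) (which is anti-invariant under $(x,y)\mapsto(y,-x)$, so its norming directions $(\alpha,\beta)$ and $(\beta,-\alpha)$ are indeed related by an isometry), but the polynomials in family (iii) are not invariant or anti-invariant under any signed permutation, so their two norming lines are not obviously in a single orbit. The distinction between (ii) and (iii) comes from solving the Lagrange system according to whether $P$ takes equal or opposite signs at the two norming directions (hence the $\alpha^2+\beta^2$ versus $\alpha^2-\beta^2$ denominators), not from which isometry relates them; your trichotomy would have to be replaced by, and then justified as, an exhaustive case analysis of the critical-point structure. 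Second, the entire ``if and only if'' rests on the second-order (and, at axis points, fractional-order $t^{p/(p-1)}$) analysis that you explicitly defer as ``the technical heart'': none of the listed families has three distinct norming lines for generic parameters, so the easy sufficiency criterion never applies, and both the extremality of every listed polynomial and the non-extremality of everything else remain unverified. As written, the argument establishes neither inclusion.
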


From Propositions 2.1 and 2.3 in \cite{grecu} we have the extreme
polynomials of $\mathcal{P}(^{2}\ell _{p}^{2})$ for $p>2$.

\begin{theorem}[B. Grecu \cite{grecu}]\label{extremal_geq_2}
Let $p>2$. A $2$-homogeneous polynomial of unit norm $P$ is a extreme point
of the unit ball of $\mathcal{P}(^{2}\ell _{p}^{2})$ if and only if

\begin{itemize}
\item[(i)] $P\left( x,y\right) =ax^{2}+cy^{2}$ where $ac\geq 0$ and $%
\left\vert a\right\vert ^{\frac{p}{p-2}}+\left\vert c\right\vert ^{\frac{p}{%
p-2}}=1$, or

\item[(ii)] $P\left( x,y\right) =\pm \left( \frac{\alpha ^{p}-\beta ^{p}}{%
\alpha ^{2}+\beta ^{2}}\left( x^{2}-y^{2}\right) +2\alpha \beta \frac{\alpha
^{p-2}+\beta ^{p-2}}{\alpha ^{2}+\beta ^{2}}xy\right) $, with $\alpha ,\beta
\geq 0$ and $\alpha ^{p}+\beta ^{p}=1$.
\end{itemize}
\end{theorem}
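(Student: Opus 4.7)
The plan is to analyze extreme points via the structure of the norming set combined with a second-order perturbation argument. Write $P(x,y) = ax^{2}+bxy+cy^{2}$; the norm $\|P\|_{p}$ is attained at critical points on the $\ell _{p}^{2}$ unit sphere determined by the Lagrange equations $2ax+by=2\mu x^{p-1}$, $bx+2cy=2\mu y^{p-1}$ with $\mu =P(x,y)$. By the standard criterion, a polynomial $P$ of unit norm is extreme in $B_{\|\cdot \|_{p}}$ if and only if no nonzero $Q$ satisfies both $\|P\pm Q\|_{p}\leq 1$; any such $Q$ must necessarily vanish at every norming point of $P$, so if $\mathcal{K}(P)$ denotes the subspace of polynomials with this vanishing property, then $\mathcal{K}(P)=\{0\}$ implies extremality, while $\dim \mathcal{K}(P)\geq 1$ forces a second-order analysis along the kernel.

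For sufficiency of family (i), $P=ax^{2}+cy^{2}$ with $a,c\geq 0$ is even in each variable, so its interior critical point $(x_{0},y_{0})$ (identified via the Lagrange equations, with $x_{0}^{p-2}=a$, $y_{0}^{p-2}=c$, and unit-norm condition $a^{p/(p-2)}+c^{p/(p-2)}=1$) comes with the four-point symmetry orbit $\{(\pm x_{0},\pm y_{0})\}$. These yield the two independent constraints $b^{\prime }=0$ and $a^{\prime }x_{0}^{2}+c^{\prime }y_{0}^{2}=0$ on $Q=a'x^{2}+b'xy+c'y^{2}$, so $\mathcal{K}(P)$ is one-dimensional and spanned by $Q_{0}=y_{0}^{2}x^{2}-x_{0}^{2}y^{2}$. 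Since $P+\epsilon Q_{0}$ remains diagonal for small $\epsilon $, its norm equals the explicit expression $((a+\epsilon y_{0}^{2})^{p/(p-2)}+(c-\epsilon x_{0}^{2})^{p/(p-2)})^{(p-2)/p}$; this function has zero first derivative and strictly positive second derivative at $\epsilon =0$, the latter coming from strict convexity of $u\mapsto u^{p/(p-2)}$ (the exponent exceeds $1$ precisely because $p>2$). Hence $\|P\pm \epsilon Q_{0}\|_{p}>1$ for $\epsilon \neq 0$ and $P$ is extreme.

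For sufficiency of family (ii), the key observation is that the displayed polynomial satisfies the anti-symmetry $P(\sigma (x,y))=-P(x,y)$ where $\sigma (x,y)=(y,-x)$ (a direct check using $c=-a$), and $\sigma $ is an isometry of $\ell _{p}^{2}$ for every $p$. Thus if $(\alpha ,\beta )$ is a norming point with $P(\alpha ,\beta )=1$ (verifiable directly by substitution into the Lagrange equations), then $\sigma (\alpha ,\beta )=(\beta ,-\alpha )$ is automatically a norming point with $P=-1$. These two norming configurations produce two independent linear constraints on $Q$, so $\dim \mathcal{K}(P)=1$, and a second-order Taylor expansion of $\|P+\epsilon Q\|_{p}$ at $\epsilon =0$ (first derivative vanishing since $Q$ is zero at both norming points; second derivative positive from the local curvature of the $\ell _{p}^{2}$ sphere at those points) gives $\|P\pm \epsilon Q\|_{p}>1$, whence $P$ is extreme.

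For necessity I would proceed by elimination. A diagonal polynomial with $ac<0$ has norming points only on the coordinate axes, and a direct computation produces an explicit midpoint decomposition using a perturbation of the form $Q=\epsilon xy$ combined with a compensating diagonal correction. A non-diagonal $P$ lacking the anti-symmetry $P\circ \sigma =-P$ has generically only one norming configuration up to the symmetries, giving a single linear constraint and $\dim \mathcal{K}(P)=2$; within this two-dimensional kernel a direction can be chosen along which the first-order norm change is nonpositive, again yielding a midpoint decomposition. The main obstacle I anticipate is precisely this last step: producing an explicit admissible perturbation in the generic case requires delicate manipulation of the Lagrange identities to certify $\|P\pm \epsilon Q\|_{p}\leq 1$ at second order, and is the technical crux. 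The role of $p>2$ enters through the second-order convexity of $u\mapsto u^{p/(p-2)}$; for $1<p<2$ the same scheme produces the additional family (iii) of the previous theorem, reflecting the inverted curvature behavior of the $\ell _{p}^{2}$ sphere.
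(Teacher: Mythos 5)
First, a point of reference: the paper does not prove this statement at all --- it is imported verbatim from Propositions 2.1 and 2.3 of Grecu's article \cite{grecu} --- so there is no in-paper argument to measure your proposal against; it must stand on its own. Your overall strategy (norming points, the kernel $\mathcal{K}(P)$ of admissible perturbations, second-order analysis along that kernel) is the natural one, and the computation for family (i) in the generic case $a,c>0$ is correct. But there are genuine gaps, the most serious being the necessity half, which you yourself flag as incomplete at its crux: for a generic non-antisymmetric $P$ you obtain $\dim\mathcal{K}(P)=2$, and you still owe an actual segment through $P$ contained in the unit sphere, not merely a direction of nonpositive first-order growth --- first-order stationarity is exactly what holds in your sufficiency proofs as well, so it cannot by itself separate extreme from non-extreme points. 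Moreover the elimination step is false as written in one case: $P(x,y)=x^{2}-y^{2}$ is diagonal with $ac<0$, yet it belongs to family (ii) (take $\alpha=1$, $\beta=0$) and \emph{is} extreme; the perturbation $Q=\epsilon xy$ you propose there satisfies $\|P\pm\epsilon xy\|_{p}>1$ for every $\epsilon\neq 0$, as one sees by evaluating at $\bigl((1-\delta^{p})^{1/p},\delta\bigr)$ with $\delta$ small and of the sign of $\epsilon$, where the value is $1+\epsilon\delta-\delta^{2}-\tfrac{2}{p}\delta^{p}+o(\delta^{p})>1$. The necessity case split must therefore be organized around which unit-norm polynomials are \emph{not} of the forms (i) and (ii), and that is not the dichotomy you describe.

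The sufficiency argument for family (ii) also has a gap at its decisive step. The function $g(\epsilon)=\|P+\epsilon Q\|_{p}$ is convex with $g'(0)=0$ by your norming-point argument, so $g\geq 1$ automatically; the real issue is to rule out that $g$ is \emph{constant} near $0$, i.e.\ that $P$ lies in the relative interior of a face. The appeal to ``the local curvature of the $\ell_{p}^{2}$ sphere'' does not settle this: the second derivative of the local maximum near a norming point $z_{0}$ equals $-\nabla_{T}Q(z_{0})^{\mathsf{T}}H^{-1}\nabla_{T}Q(z_{0})$, where $H$ is the constrained Hessian of $P$ at $z_{0}$, and this is strictly positive only if $H$ is nondegenerate \emph{and} the tangential derivative of the spanning element of $\mathcal{K}(P)$ is nonzero at $z_{0}$; neither is verified. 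Two further loose ends: you never check that the polynomials in (ii) actually have norm $1$ (substitution into the Lagrange equations certifies a critical point, not a global maximum), and in family (i) the boundary cases $a=0$ or $c=0$ have a two-dimensional kernel, so the one-dimensional argument does not apply and a separate (easy, using $p>2$) estimate is required.
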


\begin{theorem}
For every $q\geq 1$ and $p>2$, let $f_{q,p}:[0,1]\rightarrow{\mathbb R}$ be given by
    $$
    f_{q,p}(t)=\left[ 2\left| \frac{2t^{p}-1}{t^{2}+
    \left(1-t^{p}\right)^{\frac{2}{p}}}\right|^{q}+\left(2t
    \left( 1-t^{p}\right)^{\frac{1}{p}}\frac{t^{p-2}+\left( 1-t^{p}\right)^{\frac{p-2}{p}}}{t^{2}+\left( 1-t^{p}\right)^{\frac{2}{p}}}\right)^{q}\right]^{\frac{1}{q}}.
    $$
Then
\begin{equation*}
K_{2,q,p}=\max\{f_{q,p}(t):t\in[0,1]\}.
\end{equation*}
\end{theorem}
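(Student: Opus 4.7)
The plan is to apply the Krein--Milman approach described in the introduction to the target function $P\mapsto |P|_q$ on the compact convex set $B_{\|\cdot\|_p}\subset\mathcal{P}(^2\ell_p^2)$. Since $|\cdot|_q$ is a continuous convex function, its maximum is attained at an extreme point of $B_{\|\cdot\|_p}$, and Theorem~\ref{extremal_geq_2} gives a complete description of these extreme points as two families: a diagonal family (i) and a one-parameter family (ii) on the constraint $\alpha^p+\beta^p=1$. The strategy is to show that family (ii) produces exactly $\max_{t\in[0,1]}f_{q,p}(t)$ and that family (i) never exceeds this value.

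For family (ii) I would parametrize by $t\in[0,1]$ via $\alpha=t$ and $\beta=(1-t^p)^{1/p}$, which is a bijection onto the admissible pairs. Substituting into Grecu's formula, the polynomial reads $\pm\bigl(A(x^2-y^2)+B\,xy\bigr)$ with
\[
A=\frac{2t^p-1}{t^2+(1-t^p)^{2/p}},\qquad B=\frac{2t(1-t^p)^{1/p}\bigl(t^{p-2}+(1-t^p)^{(p-2)/p}\bigr)}{t^2+(1-t^p)^{2/p}}.
\]
The three monomial coefficients of $P$ are $\pm A$, $\mp A$, $\pm B$, and since $B\geq 0$ on $[0,1]$ one obtains $|P|_q^q=2|A|^q+B^q$, which is exactly $f_{q,p}(t)^q$. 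Therefore the supremum of $|P|_q$ over family (ii) equals $\max_{t\in[0,1]}f_{q,p}(t)$.

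For family (i) one has $|P|_q=(|a|^q+|c|^q)^{1/q}$ under $|a|^s+|c|^s=1$ with $s=p/(p-2)>1$. Writing $|a|=u^{1/s}$ and $|c|=(1-u)^{1/s}$ for $u\in[0,1]$, the function $u\mapsto u^{q/s}+(1-u)^{q/s}$ is convex when $q\geq s$ (with maximum $1$, attained at $u\in\{0,1\}$) and concave when $q<s$ (with maximum $2^{1-q/s}$, attained at $u=1/2$). Either way $|P|_q\leq 2^{1/q}=f_{q,p}(0)=f_{q,p}(1)$, so family (i) contributes no new maximum. Combining both cases yields $K_{2,q,p}=\max_{t\in[0,1]}f_{q,p}(t)$.

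The only real obstacle is the algebraic bookkeeping in the identity $|P|_q^q=f_{q,p}(t)^q$ for family (ii); one must keep track of the sign of $A$, which changes at $t=2^{-1/p}$ and forces the absolute value $|2t^p-1|$ in the definition of $f_{q,p}$, while $B$ remains nonnegative throughout $[0,1]$. No analytic difficulty arises, since the fractional powers $(1-t^p)^{a/p}$ with $a>0$ extend continuously to $t=1$, so $f_{q,p}$ is continuous on $[0,1]$ and the maximum is attained.
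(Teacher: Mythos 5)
Your proof is correct and follows essentially the same route the paper takes: the Krein--Milman approach applied to $P\mapsto|P|_q$ on $B_{\|\cdot\|_p}$, combined with Grecu's classification of the extreme points, with family (ii) parametrized by $\alpha=t$, $\beta=(1-t^p)^{1/p}$ yielding exactly $f_{q,p}$. Your analytic dismissal of the diagonal family (i) via the bound $|P|_q\leq 2^{1/q}=f_{q,p}(0)$ is a nice touch, since the paper only disposes of that family by direct computation (or graphically) in the special cases $q=\tfrac{p}{p-2}$ and $q=\tfrac{4p}{3p-4}$ treated in Section 4.
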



\begin{remark}
We have computationally checked that, for every $q\geq 1$ and $1<p<2$, if we define $f_{q,p}:[0,1]\rightarrow{\mathbb R}$ as in the previous theorem, i.e.,
    $$
    f_{q,p}(t)=\left[ 2\left| \frac{2t^{p}-1}{t^{2}+
    \left(1-t^{p}\right)^{\frac{2}{p}}}\right|^{q}+\left(2t
    \left( 1-t^{p}\right)^{\frac{1}{p}}\frac{t^{p-2}+\left( 1-t^{p}\right)^{\frac{p-2}{p}}}{t^{2}+\left( 1-t^{p}\right)^{\frac{2}{p}}}\right)^{q}\right]^{\frac{1}{q}},
    $$
then
\begin{equation*}
K_{2,q,p}=\max\{f_{q,p}(t):t\in[0,1]\}.
\end{equation*}

\end{remark}

\section{Application to the calculation of the polynomial Hardy--Littlewood constants}

The Krein-Milman approach provides a method to calculate Hardy--Littlewood constants at least for the case of $2$-homogeneus polynomials on $\mathbb{R}^2$. Let us take into consideration the following general lower bound for $m\geq 2$ and $2m\leq p<\infty$ (see \cite{ajmnpss}):
\begin{equation*}  \label{previoushl}
D_{\mathbb{R},m,p}\geq \left( \sqrt[16]{2}\right) ^{m}\geq 2^{\frac{%
m^{2}p+10m-p-6m^{2}-4}{4mp}}.
\end{equation*}

\begin{theorem}
\label{newestimateshl} Let $p>2$. Then,

\begin{itemize}
\item[(i)] For $2<p\leq 4$,
\begin{eqnarray*}
C_{\mathbb{R},2,p} & \geq & C_{\mathbb{R},2,p}(2)  =  K_{2,\frac{p}{p-2},p} \\
& = & \max_{{\alpha \in }\left[ 0,1\right] } \left[ 2\left| \frac{2\alpha
^{p}-1}{\alpha ^{2}+\left( 1-\alpha ^{p}\right) ^{\frac{2}{p}}}\right| ^{%
\frac{p}{p-2}}+\left( 2\alpha \left( 1-\alpha ^{p}\right) ^{\frac{1}{p}}%
\frac{\alpha ^{p-2}+\left( 1-\alpha ^{p}\right) ^{\frac{p-2}{p}}}{\alpha
^{2}+\left( 1-\alpha ^{p}\right) ^{\frac{2}{p}}}\right) ^{\frac{p}{p-2}}%
\right] ^{\frac{p-2}{p}}.
\end{eqnarray*}

\item[(ii)] For $4\leq p\leq \infty$,
\begin{eqnarray*}
D_{\mathbb{R},2,p} & \geq & D_{\mathbb{R},2,p}(2)= K_{2,\frac{4p}{3p-4},p} \\
& = & \max_{{\alpha \in }\left[ 0,1\right] }\left[ 2\left| \frac{2\alpha
^{p}-1}{\alpha ^{2}+\left( 1-\alpha ^{p}\right) ^{\frac{2}{p}}}\right| ^{%
\frac{4p}{3p-4}}+\left( 2\alpha \left( 1-\alpha ^{p}\right) ^{\frac{1}{p}}%
\frac{\alpha ^{p-2}+\left( 1-\alpha ^{p}\right) ^{\frac{p-2}{p}}}{\alpha
^{2}+\left( 1-\alpha ^{p}\right) ^{\frac{2}{p}}}\right) ^{\frac{4p}{3p-4}}%
\right] ^{\frac{3p-4}{4p}}.
\end{eqnarray*}
\end{itemize}
\end{theorem}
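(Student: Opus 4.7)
The plan is to obtain both estimates by combining two facts already at our disposal in the excerpt. First, from the identifications recorded in the introduction (with $m=2$),
$$
C_{\mathbb R,2,p}=\sup_{n\in{\mathbb N}}K_{2,n,\frac{p}{p-2},p}\quad(2<p\leq 4),\qquad D_{\mathbb R,2,p}=\sup_{n\in{\mathbb N}}K_{2,n,\frac{4p}{3p-4},p}\quad(4\leq p\leq\infty),
$$
one immediately gets the tautological lower bounds $C_{\mathbb R,2,p}\geq C_{\mathbb R,2,p}(2)=K_{2,2,\frac{p}{p-2},p}$ and $D_{\mathbb R,2,p}\geq D_{\mathbb R,2,p}(2)=K_{2,2,\frac{4p}{3p-4},p}$ simply by choosing $n=2$ in the supremum. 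In the two-variable notation used throughout the paper these are exactly $K_{2,\frac{p}{p-2},p}$ and $K_{2,\frac{4p}{3p-4},p}$.

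Second, the exact value of $K_{2,q,p}$ for $p>2$ and $q\geq 1$ has already been computed in the previous theorem of the excerpt, which asserts
$$
K_{2,q,p}=\max_{t\in[0,1]}f_{q,p}(t),
$$
with $f_{q,p}$ the function
$$
f_{q,p}(t)=\left[2\left|\frac{2t^{p}-1}{t^{2}+(1-t^{p})^{2/p}}\right|^{q}+\left(2t(1-t^{p})^{1/p}\,\frac{t^{p-2}+(1-t^{p})^{(p-2)/p}}{t^{2}+(1-t^{p})^{2/p}}\right)^{q}\right]^{1/q}.
$$
Substituting $q=\tfrac{p}{p-2}$ in case (i) and $q=\tfrac{4p}{3p-4}$ in case (ii) gives the two displayed closed-form expressions verbatim. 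The only routine checks needed are that both exponents lie in $[1,\infty)$ so that the preceding theorem is applicable: for $2<p\leq 4$ the exponent $\tfrac{p}{p-2}$ ranges over $[2,\infty)$, while for $4\leq p\leq\infty$ the exponent $\tfrac{4p}{3p-4}$ ranges over $[\tfrac{4}{3},2]$. In addition, one must observe $p>2$ in both regimes so that the Grecu description of $\ext(B_{\|\cdot\|_p})$ used to prove the $K_{2,q,p}$-theorem remains in force.

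There is no real obstacle in this proof: the statement is a direct packaging of the Krein--Milman computation of $K_{2,q,p}(2)$ with the obvious monotonicity $\sup_{n}(\cdot)\geq(\cdot)|_{n=2}$. The only mildly delicate point is keeping track of the two branches of the Hardy--Littlewood inequality, namely \eqref{ali:HL1} for $2<p\leq 4$ (giving the exponent $\tfrac{p}{p-2}$) and \eqref{ali:HL2} for $4\leq p\leq\infty$ (giving $\tfrac{4p}{3p-4}$), and correspondingly naming the constants $C_{\mathbb R,2,p}$ or $D_{\mathbb R,2,p}$. Everything else is a direct quotation of previously established formulas.
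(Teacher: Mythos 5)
Your proposal is correct, and it reaches the conclusion by a more economical route than the paper does. You treat the theorem as a pure corollary: the inequality $C_{\mathbb{R},2,p}\geq C_{\mathbb{R},2,p}(2)$ (resp.\ $D$) is the trivial $n=2$ instance of the supremum defining the Hardy--Littlewood constant, the identification $C_{\mathbb{R},2,p}(2)=K_{2,\frac{p}{p-2},p}$ and $D_{\mathbb{R},2,p}(2)=K_{2,\frac{4p}{3p-4},p}$ is definitional, and the closed form is the earlier theorem $K_{2,q,p}=\max_{t\in[0,1]}f_{q,p}(t)$ specialized to $q=\frac{p}{p-2}$ and $q=\frac{4p}{3p-4}$; your range checks on these exponents are right. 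The paper instead re-runs the Krein--Milman computation from scratch: it evaluates $|P|_q$ on both families of Grecu's extreme points of $B_{\|\cdot\|_p}$ --- the diagonal polynomials $ax^2+cy^2$ with $|a|^{\frac{p}{p-2}}+|c|^{\frac{p}{p-2}}=1$ and the family parametrized by $\alpha^p+\beta^p=1$ --- and then shows the second family dominates. That comparison is the only substantive step, and it is exactly what your citation of the previous theorem hides: in branch (i) the diagonal family contributes exactly $1$ while $f$ already attains $2^{\frac{p-2}{p}}>1$ at $\alpha=1$, but in branch (ii) the diagonal family contributes $\max_a\bigl[a^{\frac{4p}{3p-4}}+(1-a^{\frac{p}{p-2}})^{\frac{4p-8}{3p-4}}\bigr]^{\frac{3p-4}{4p}}$, which the paper only dominates by a plotted graph (Figure 1). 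So your argument buys brevity and avoids duplicating the extreme-point analysis, at the cost of relying on the stated-but-unproved $K_{2,q,p}$ theorem to absorb that comparison; the paper's version buys an explicit (if partly graphical) verification that the diagonal extreme points never realize the maximum. One shared loose end, not a defect of your write-up relative to the paper: at $p=\infty$ the function $f_{q,p}$ and the displayed formula involve $t^p$ and are only formal, so that endpoint really needs the separate computation of $K_{2,q,\infty}$ via $f_{q,\infty}$ from Section 2; neither proof addresses this explicitly.
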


\begin{proof}
We have
\begin{eqnarray*}
K_{2,\frac{p}{p-2},p} &=&\max \left\{ \max_{\overset{ac\geq 0}{\left\vert
a\right\vert ^{\frac{p}{p-2}}+\left\vert c\right\vert ^{\frac{p}{p-2}}=1}%
}\left( \left\vert a\right\vert ^{\frac{p}{p-2}}+\left\vert c\right\vert ^{%
\frac{p}{p-2}}\right) ^{\frac{p-2}{p}},\right. \\
&&\qquad \qquad \left. \max_{\overset{\alpha ,\beta \geq 0}{\alpha ^{p}{%
+\beta }^{p}{=1}}}\left[ 2\left| \frac{\alpha ^{p}-\beta ^{p}}{\alpha
^{2}+\beta ^{2}}\right|^{\frac{p}{p-2}}+\left( 2\alpha \beta \frac{\alpha
^{p-2}+\beta ^{p-2}}{\alpha ^{2}+\beta ^{2}}\right) ^{\frac{p}{p-2}}\right]
^{\frac{p-2}{p}}\right\} \\
&=&\max \left\{ 1,\max_{{\alpha \in }\left[ 0,1\right] }\left[ 2\left| \frac{%
2\alpha ^{p}-1}{\alpha ^{2}+\left( 1-\alpha ^{p}\right) ^{\frac{2}{p}}}%
\right| ^{\frac{p}{p-2}}\right. \right. \\
&&\qquad \qquad \left. \left. +\left( 2\alpha \left( 1-\alpha ^{p}\right) ^{%
\frac{1}{p}}\frac{\alpha ^{p-2}+\left( 1-\alpha ^{p}\right) ^{\frac{p-2}{p}}%
}{\alpha ^{2}+\left( 1-\alpha ^{p}\right) ^{\frac{2}{p}}}\right) ^{\frac{p}{%
p-2}}\right] ^{\frac{p-2}{p}}\right\} \\
&=&\max_{{\alpha \in }\left[ 0,1\right] } \left[ 2\left| \frac{2\alpha ^{p}-1%
}{\alpha ^{2}+\left( 1-\alpha ^{p}\right) ^{\frac{2}{p}}}\right| ^{\frac{p}{%
p-2}}+\left( 2\alpha \left( 1-\alpha ^{p}\right) ^{\frac{1}{p}}\frac{\alpha
^{p-2}+\left( 1-\alpha ^{p}\right) ^{\frac{p-2}{p}}}{\alpha ^{2}+\left(
1-\alpha ^{p}\right) ^{\frac{2}{p}}}\right) ^{\frac{p}{p-2}}\right] ^{\frac{%
p-2}{p}}
\end{eqnarray*}%
and, similarly,%
\begin{align*}
& D_{\mathbb{\mathbb{R}},2,p}(2)  \notag \\
& = \max \left\{ \max_{a\in \left[ 0,1\right] }\left[ a ^{\frac{4p}{3p-4}%
}+\left( 1- a^{\frac{p}{p-2}}\right) ^{\frac{4p-8}{3p-4}}\right] ^{\frac{3p-4%
}{4p}},\right.  \notag \\
& \qquad\qquad \left. \max_{{\alpha \in }\left[ 0,1\right] }\left[ 2\left|
\frac{2\alpha ^{p}-1}{\alpha ^{2}+\left( 1-\alpha ^{p}\right) ^{\frac{2}{p}}}%
\right| ^{\frac{4p}{3p-4}}+\left( 2\alpha \left( 1-\alpha ^{p}\right) ^{%
\frac{1}{p}}\frac{\alpha ^{p-2}+\left( 1-\alpha ^{p}\right) ^{\frac{p-2}{p}}%
}{\alpha ^{2}+\left( 1-\alpha ^{p}\right) ^{\frac{2}{p}}}\right) ^{\frac{4p}{%
3p-4}}\right] ^{\frac{3p-4}{4p}}\right\}  \\
&= \max_{{\alpha \in }\left[ 0,1\right] }\left[ 2\left| \frac{2\alpha ^{p}-1%
}{\alpha ^{2}+\left( 1-\alpha ^{p}\right) ^{\frac{2}{p}}}\right| ^{\frac{4p}{%
3p-4}}+\left( 2\alpha \left( 1-\alpha ^{p}\right) ^{\frac{1}{p}}\frac{\alpha
^{p-2}+\left( 1-\alpha ^{p}\right) ^{\frac{p-2}{p}}}{\alpha ^{2}+\left(
1-\alpha ^{p}\right) ^{\frac{2}{p}}}\right) ^{\frac{4p}{3p-4}}\right] ^{%
\frac{3p-4}{4p}}.  \notag
\end{align*}
In order to check why the last equality is true, see Figure \ref{fig:comp1},
where the difference $\Psi(p)-\Phi(p)$ has been sketched,
where $\Phi$ and $\Psi$, are, respectively, the two functions in this last expression.
\begin{figure}[tbp]
\centering
\includegraphics[width=0.8\textwidth]{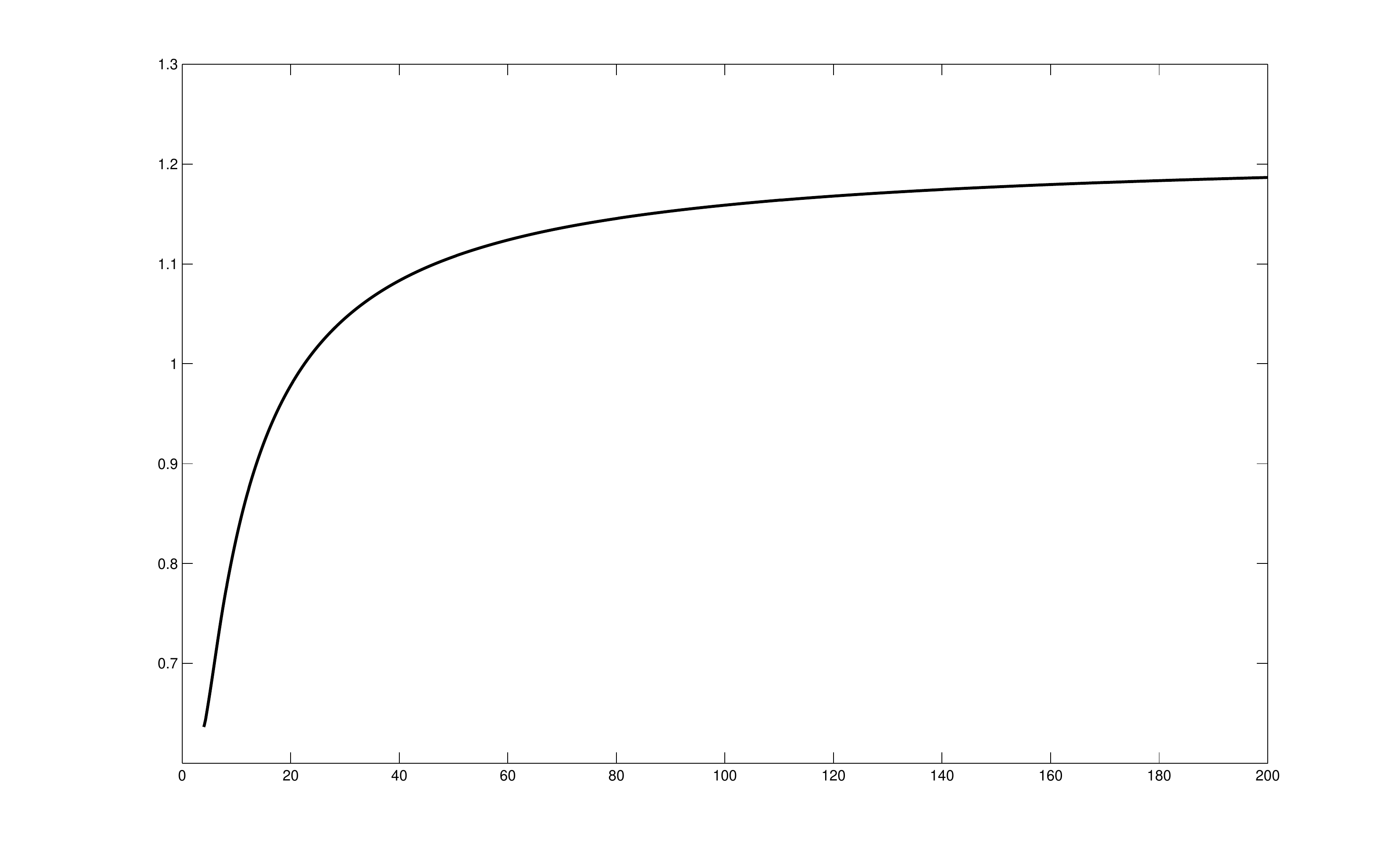}
\caption{Graph of the difference $\Psi(p)-\Phi(p)$.}
\label{fig:comp1}
\end{figure}
\end{proof}

In general, it is not possible to optimize the function appearing in the previous theorem. However, we can obtain the following interesting result for $p=4$.

\begin{theorem}
If $p=4$ we have
$$
D_{\mathbb{R},2,4}(2) = C_{\mathbb{R},2,4}(2) =  K_{2,2,4} = \max_{{\alpha \in }\left[ 0,1\right] }\left[ 2\left\vert \frac{2\alpha^{4}-1}{\alpha ^{2}+\left( 1-\alpha ^{4}\right) ^{\frac{1}{2}}}\right\vert
^{2}+\left( 2\alpha \left( 1-\alpha ^{4}\right) ^{\frac{1}{4}}\right) ^{2}\right] ^{\frac{1}{2}} =\sqrt{2}.
$$
Moreover, surprisingly all the extreme polynomials given in theorem \ref{extremal_geq_2} are also extremal. More explicitly, for the polynomials
$$
P\left( x,y\right) =\pm \left( \frac{\alpha ^{p}-\beta ^{p}}{%
\alpha ^{2}+\beta ^{2}}\left( x^{2}-y^{2}\right) +2\alpha \beta \frac{\alpha
^{p-2}+\beta ^{p-2}}{\alpha ^{2}+\beta ^{2}}xy\right), \text{ with }\alpha ,\beta
\geq 0\text{ and }\alpha ^{p}+\beta ^{p}=1
$$
we obtain
$$
|P|_{2}=\sqrt{2}\|P\|_{4}.
$$
\end{theorem}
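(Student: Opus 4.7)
The plan is to exploit two observations specific to $p=4$. First, for $m=2$ and $p=4$ the two Hardy--Littlewood exponents coincide: $p/(p-m)=2$ and $2mp/(mp+p-2m)=2$. From the identities $C_{\mathbb{R},2,p}(n)=K_{m,n,p/(p-m),p}$ and $D_{\mathbb{R},2,p}(n)=K_{m,n,2mp/(mp+p-2m),p}$, this collapses the three quantities in the statement to $C_{\mathbb{R},2,4}(2)=D_{\mathbb{R},2,4}(2)=K_{2,2,4}$. Thus it suffices to compute $K_{2,2,4}$ and to identify the polynomials attaining the extremal ratio.

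Next, I will apply the Krein--Milman approach exactly as in the proof of Theorem~\ref{newestimateshl}: $K_{2,2,4}$ equals the supremum of $|P|_2$ as $P$ ranges over $\ext(B_{\|\cdot\|_4})$, and by Theorem~\ref{extremal_geq_2} one only needs to compare type (i) and type (ii) extreme polynomials. For type (i), $P(x,y)=ax^2+cy^2$ with $|a|^{p/(p-2)}+|c|^{p/(p-2)}=1$; since $p/(p-2)=2$ at $p=4$, this becomes $a^2+c^2=1$, so $|P|_2=(a^2+c^2)^{1/2}=1$.

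The heart of the proof lies in a simplification for type (ii) polynomials when $p=4$. Setting $s=\alpha^2$, $r=\beta^2$, the constraint $\alpha^4+\beta^4=1$ reads $s^2+r^2=1$, and
\begin{equation*}
\frac{\alpha^4-\beta^4}{\alpha^2+\beta^2}=s-r,\qquad \frac{\alpha^{p-2}+\beta^{p-2}}{\alpha^2+\beta^2}\bigg|_{p=4}=1,
\end{equation*}
so the extreme polynomial reduces to $P(x,y)=\pm\bigl((s-r)(x^2-y^2)+2\alpha\beta\,xy\bigr)$. Hence
\begin{equation*}
|P|_2^{\,2}=2(s-r)^2+4\alpha^2\beta^2=2(s-r)^2+4sr=2(s^2+r^2)=2.
\end{equation*}
In particular $f_{2,4}\equiv\sqrt{2}$ on $[0,1]$, every type (ii) extreme polynomial satisfies $|P|_2=\sqrt{2}\,\|P\|_4$, and since $\sqrt{2}>1$ beats the value $1$ obtained on type (i), the maximum over all extreme polynomials equals $\sqrt{2}$. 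This simultaneously yields the closed form and the ``moreover'' statement.

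The only genuine obstacle is spotting the algebraic identity: $(\alpha^4-\beta^4)/(\alpha^2+\beta^2)=\alpha^2-\beta^2$ together with the collapse of the cross-term coefficient to $2\alpha\beta$ at $p=4$; once the parametrization $s^2+r^2=1$ is in place, the identity $2(s-r)^2+4sr=2(s^2+r^2)$ finishes the argument with no numerical optimization required.
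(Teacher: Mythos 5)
Your proof is correct and follows essentially the route the paper intends: specialize Theorem~\ref{newestimateshl} to $p=4$ (where both Hardy--Littlewood exponents collapse to $2$, identifying $C_{\mathbb{R},2,4}(2)$, $D_{\mathbb{R},2,4}(2)$ and $K_{2,2,4}$), and then observe that for type (ii) extreme polynomials the identity $2(\alpha^2-\beta^2)^2+4\alpha^2\beta^2=2(\alpha^4+\beta^4)=2$ makes $f_{2,4}$ identically $\sqrt{2}$, which both evaluates the maximum and yields the ``moreover'' claim. The paper leaves this computation implicit, so your explicit verification (including the comparison with the value $1$ on type (i) polynomials) fills in exactly the intended argument.
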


Observe that in \cite{cavnunpell} the authors provide numerical lower bound for $D_{\mathbb{R},2,4}(2)$, which is equal to $1.414213562373095$. However, here we obtain that this lower bound is, precisely, $\sqrt{2}$.

\begin{remark} \label{tab_p_2_4} For the case where $2<p <4$ we provide below some numerical values for some choices of $p$ in order to calculate $C_{\mathbb{R},2,p}(2)$. We also give the graphs of the corresponding functions in Figure \ref{fig_p_2_4}.

\begin{center}
	\begin{tabular}{r|c}
		\hline
		$p$ & Maximum \\
		\hline
		2.2 & $1.87786$ \\
		2.4 & $1.78179$ \\
		2.6 & $1.70436$ \\
		2.8 & $1.64067$ \\
		3 & $1.58740$ \\
		3.2 & $1.54221$ \\
		3.4 & $1.50340$ \\
		3.6 & $1.46973$ \\
		3.8 & $1.44024$ \\		
	\end{tabular}
\end{center}

\begin{figure}
	\centering
		\includegraphics[width=0.7\textwidth]{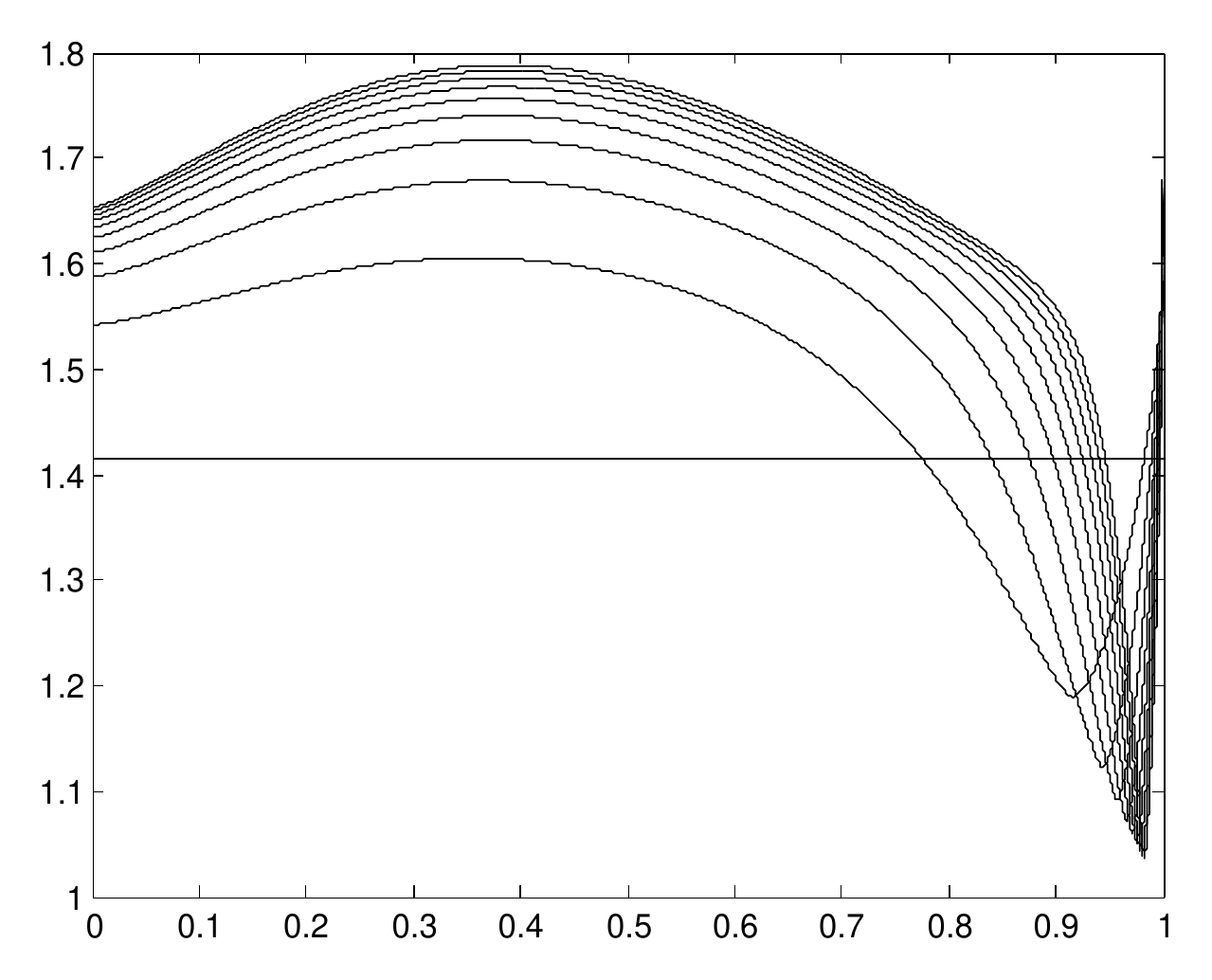}
		\caption{Representation of the functions $f_{\frac{p}{p-2},p}$ for $p$ as in Remark \ref{tab_p_2_4}}
	\label{fig_p_2_4}
\end{figure}

\end{remark}

\begin{remark} The previous function cannot be optimized explicitly in general. We provide a table with some numerical calculations. Notice that, as $p$ increases, the value of the constant $K_{2,{\frac{4p}{3p-4}},p}$ approaches $K_{2,\frac{4}{3},\infty}=D_{\mathbb{R},2}(2) \approx 1.83737$.

\begin{center}
	\begin{tabular}{r|c|c}
		\hline
		$p$ & Maximum & Point of attainment\\
		\hline
		4 & $\sqrt{2}$ & --- \\
		5 & $1.48488$ & $0.99930$ \\
		6 & $1.53632$ & $0.99974$ \\
		7 & $1.57512$ & $0.34940$\\
		8 & $1.60526$ & $0.35688$ \\
		9 & $1.62927$ & $0.36223$ \\
		12 & $1.67869$ & $0.37151$ \\
		25 & $1.75927$ & $0.38261$ \\
		50 & $1.79786$ & $0.38667$ \\
		150 & $1.82410$ & $0.38911$ \\
		250 & $1.82939$ & $0.38957$
	\end{tabular}
\end{center}

\begin{figure}
	\centering
	\begin{tabular}{cc}
		\includegraphics[width=0.5\textwidth]{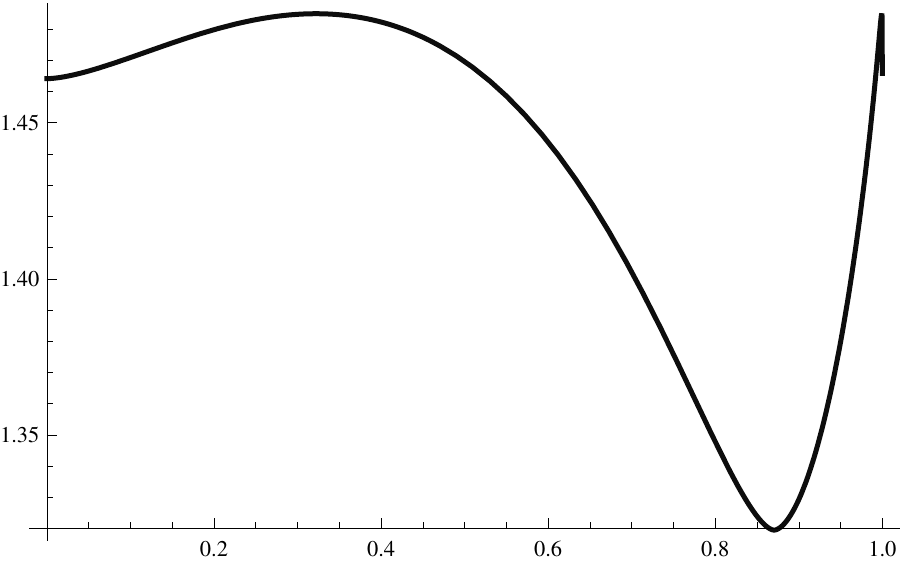} & \includegraphics[width=0.5\textwidth]{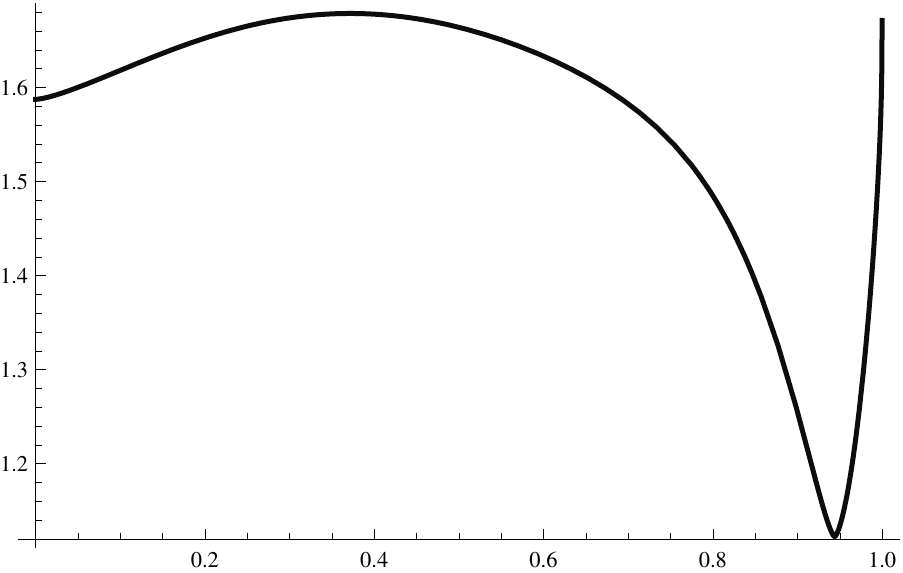}\\
		(a) $p=5$. & (b) $p=12$.
	\end{tabular}
	\label{HL5and12}
\end{figure}
\end{remark}

\section{Lower bounds on the Hardy-Littlewood constants for higher degrees}

In this section we provide a lower bound on $D_{{\mathbb R},2m,4m}(2)=C_{{\mathbb R},2m,4m}(2)$ by considering powers of the extreme polynomials that appear in Theorem \ref{extremal_geq_2}. Observe that if $P$ is as in Theorem \ref{extremal_geq_2} (ii) with $p=4m$, then 
    $$
    \|P^m\|_{4m}=\|P\|^m_{4m}=1,
    $$
and hence 
    $$
    D_{{\mathbb R},2m,4m}(2)=C_{{\mathbb R},2m,4m}(2)\geq \frac{|P^m|_2}{\|P^m\|_{4m}}=|P^m|_2.
    $$
Using MATLAB in order to compute $|P^m|_2$ for large values of $m$
 with $P$ as in Theorem \ref{extremal_geq_2} (ii), we obtain the following estimates:
\begin{center}
    \begin{tabular}{r|r|c}
        \hline
        Degree & $m$ & $D_{{\mathbb R},2m,4m}(2)=C_{{\mathbb R},2m,4m}(2)\geq$\\
        \hline
        4 & $2$ & $1.2937^4$ \\
        8 & $4$ & $1.3880^8$ \\
        20 & $10$ & $ 1.4687^{20}$ \\
        100 & $50$ & $ 1.5303^{100}$\\
        400 & $200$ & $ 1.5465^{100}$\\
        600 & $300$ & $1.5487^{600}$ \\
        800 & $400$ & $1.5498^{800}$
\end{tabular}
\end{center}  
    
Observe that the above values improve the estimates on $D_{{\mathbb R},2m,4m}(2)=C_{{\mathbb R},2m,4m}(2)$ obtained in \cite{cavnunpell} by considering only powers of polynomials of degree $2$. The authors in \cite{cavnunpell} use powers of polynomials of degree greater than 2 in order to obtain a better (bigger) lower estimate on $D_{{\mathbb R},m,2m}(2)=C_{{\mathbb R},m,2m}(2)$. Their strategy consists of considering powers of  polynomials of degrees ranging from 2 to 10 enjoying the same symmetry as the extremal polynomials for the Bohnenblust-Hille constants $D_{{\mathbb R},m}(2)$ $(m=2,3,\ldots,10)$ appearing in \cite{jimmunmurseo}. However, extremal polynomials for $D_{{\mathbb R},m,2m}(2)=C_{{\mathbb R},m,2m}(2)$ may not have the same symmetries as the extremal polynomials for $D_{{\mathbb R},m}(2)$. As a matter of fact, and just to give a numerical hint on the previous comment, if we define
    $$
    P(x,y)=ax^5+bx^4y+cx^3y^2+dx^2y^3+exy^4+fy^5
    $$
with 
    \begin{align*}
    a&=0.000007233947,\\
    b&=0.607036736710,\\
    c&=-0.000044725373,\\
    d&=-0.982210559287,\\
    e&=0.0000283144953,\\
    f&=0.1875854561207,
    \end{align*}
then we obtain
    $$
    D_{{\mathbb R},5,10}(2)=C_{{\mathbb R},5,10}(2)\geq \frac{|P|_2}{\|P\|_{10}}\geq 6.236014.
    $$
However, in \cite{cavnunpell} the authors obtain
    $$
    D_{{\mathbb R},5,10}(2)=C_{{\mathbb R},5,10}(2)\geq 6.191704
    $$
using polynomials with the symmetry 
    $$
    ax^{5}-bx^{4}y-cx^{3}y^{2}+cx^{2}y^{3}+bxy^{4}-ay^{5}.
    $$
Notice that if 
\begin{align*}
a &  =0.19462,\\
b &  =0.66008,\\
c &  =0.97833
\end{align*}
then the polynomial 
    $$
    P_5(x,y)=ax^{5}-bx^{4}y-cx^{3}y^{2}+cx^{2}y^{3}+bxy^{4}-ay^{5}
    $$
is extremal for $D_{{\mathbb R},5}(2)$ (see \cite[Section 3.3]{jimmunmurseo}).

\end{document}